\theoremstyle{plain}
\newtheorem{theorem}{Theorem}[section]
\newtheorem{lemma}[theorem]{Lemma}
\theoremstyle{definition}
\newtheorem*{definition}{Definition}
\theoremstyle{remark}
\newtheorem{remark}[theorem]{Remark}
\begin{document}

\title[Radii problems for the generalized Mittag-Leffler functions]
{Radii problems for the generalized Mittag-Leffler functions}

\author[A. Prajapati]{Anuja Prajapati}
\address{Anuja Prajapati \\ Department of  Mathematics \\ Sambalpur University \\Jyoti Vihar, Burla, 768019, Sambalpur, Odisha,  India}
\email{anujaprajapati49@gmail.com}
\thanks{This work was supported by INSPIRE fellowship, Department of Science and
Technology, New Delhi, Government of India}

\subjclass{Primary 30C45, 30C15 ; Secondary: 33E12}
\keywords{Generalized Mittag-Leffler functions; radius of $\eta-$uniformly convexity of order $\rho$; radius of $\alpha-$convexity of order $\rho$; radius of $\eta-$parabolic starlikeness of order $\rho$; radius of  strong starlikeness of order $\rho$; entire functions; real zeros; Weierstrassian decomposition.}

\begin{abstract}
In this paper our aim is to find  various  radii problems  of the generalized Mittag-Leffler function for three different kinds of normalization by using their Hadamard factorization in such a way that the resulting functions are analytic. The basic tool of this study is the Mittag-Leffler function in series. Also we have shown that the obtained radii are the smallest positive roots of some functional equations. 
\end{abstract}

\maketitle

\section{Introduction}

 Geometric function theory and special functions are close related to each other, since hypergeometric functions have been used in the proof of the famous Bieberbach conjecture. Due to this conjecture various authors have considered  some  geometric properties of special functions such as Bessel, Lommel, Struve, $q$-Bessel functions, which can be expressed by the hypergeometric series. The first important results on the geometric properties of Mittag-Leffler function and other special functions can be found in \cite{brown,todd,merkes,wilf,bansal}. Actually there are  relationship between the geometric properties and the zeros of the special functions. Numerous authors has been done their works on the zeros of the special functions mentioned earlierly. It is well  known that the concepts of convexity, starlikeness, close-to-convexity and uniform convexity including necessary and sufficient conditions have a long history as a part of geometric function theory. Recently, radius problems with some geometric properties like univalence, starlikeness, convexity, uniform convexity, parabolic starlikeness, close-to-convexity, strongly starlikeness of Wright, Bessel, Struve, Lommel functions of the first kind have been investigated in \cite{baricz4,baricz5,baricz1,baricz2,baricz,baricz6,ponnusamy,baricz7,yagmur,deniz,prajapati,deniz1,gang,aktas,bohra}. Recently, the radii of convexity and starlikness of the generalized  Mittag-Leffler functions were studied by Baricz and Prajapati  \cite{prajapati}. Motivated by the above results and using the technique of Baricz et al.\cite{baricz} in this paper, our aim is to find some new results for the various radii problems  of $\eta-$uniformly convexity, $\eta-$parabolic starlikeness, $\alpha-$convexity and strong starlikeness of order $\rho$ for the three different kinds of normalization  of the generalized  Mittag-Leffler function.  
 
\subsection{Characterization of uniform convex and parabolic starlike functions} In order to present our results we need the following definitions. Let $\mathbb{D}(r)$ be the open  disk $\{z \in \mathbb{C}:|z|<r\},$ where $r>0,$ and set $\mathbb{D}=\mathbb{D}(1).$ Let $(a_{n}),\quad n \geq 2$ be a sequence of complex numbers with
$$d=\limsup_{n \rightarrow \infty}|a_{n}|^\frac{1}{n} \geq 0,  \quad\text{and}\quad r_{f}=\frac{1}{d},$$
where $r_{f}$ means the radius of convergence of the series $f(z).$ If $d=0$  then $r_{f}=+\infty.$  Moreover, let $\mathcal{A}$ be the class of analytic functions $f:\mathbb{D}(r_{f})\rightarrow \mathbb{C},$ of the form
\begin{equation}\label{uncv}
f(z)=z+\sum_{n=2}^{\infty}a_{n}z^{n}.
\end{equation}
Let $\mathcal{S}$ be the class of functions which belongs to $\mathcal{A}$ that are univalent in $\mathbb{D}(r).$ The class of convex functions, denoted by $\mathcal{C},$ is the subclass of $\mathcal{S}$ which consists of functions $f$ for which the image domain $f(\mathbb{D}(r))$ is convex domain. The real numbers
 $$r^{c}(f)=\sup \left\{r>0:\Re \left\{1+\frac{zf^{\prime\prime}(z)}{f^{\prime}(z)}\right\}>0~~ \text{for}~~~ \text{all}~~ z \in \mathbb{D}(r)\right\}$$
 and
 $$r^{c}_{\rho}(f)=\sup \left\{r>0:\Re \left\{1+\frac{zf^{\prime\prime}(z)}{f^{\prime}(z)}\right\}>\rho~~ \text{for}~~~ \text{all}~~ z \in \mathbb{D}(r)\right\}$$ are called the radius of convexity and the radius of convexity of order $\rho$ of the function $f,$ respectively. We note that $r^{c}(f)=r^{c}_{0}(f)$ is the largest radius such that the image region  $f(\mathbb{D}(r^{c}(f)))$ is a convex domain. For more details about convex functions refer to Duren book \cite{duren} and to the references therein. A function is said to be uniformly convex in $\mathbb{D}$ if $f(z)$ is in class of convex functions and has the property that for every circular arc $\varepsilon$ contained in $\mathbb{D},$ with center $\kappa$ also in $\mathbb{D},$ the arc $f(\varepsilon)$ is a convex arc.
In 1993, Ronning \cite{ronning} determined necessary and sufficient conditions of analytic functions to be uniformly convex in the open unit disk, while in 2002, Ravichandran \cite{ravi} also presented simpler criterion for uniform convexity. 
\begin{definition}
 Let $f(z)$ be the form (\ref{uncv}). Then $f$ is a uniformly convex functions if and only if 
 \begin{equation*}
 \Re \left(1+\frac{zf^{\prime\prime}(z)}{f^{\prime}(z)}\right)>\left|\frac{zf^{\prime\prime}(z)}{f^{\prime}(z)}\right|,\quad z \in \mathbb{D}.
\end{equation*}
The concept of the radius of uniform convexity is defined in \cite{deniz}. 
\begin{equation*}
r^{ucv}(f)=\sup \left\{r \in (0,r_{f}):\Re\left(1+\frac{zf^{\prime\prime}(z)}{f^{\prime}(z)}\right)>\left|\frac{zf^{\prime\prime}(z)}{f^{\prime}(z)}\right|,z \in \mathbb{D}(r)\right\}.
\end{equation*} 
\end{definition} 
 A function is said to be in the class of $\eta-$uniformly convex function of order $\rho,$ denoted by $\eta-UCV(\rho)$ in \cite{bharti} if
 \begin{equation}\label{ucv12}
\Re\left\{1+\frac{zf^{\prime\prime}(z)}{f^{\prime}(z)}\right\} >\eta \left|\frac{zf^{\prime\prime}(z)}{f^{\prime}(z)}\right|+\rho,\quad \eta \geq 0, \rho \in [0,1), z \in \mathbb{D}. 
 \end{equation}
 These classes generalize various other classes. The class $\eta-UCV(0)=\eta-UCV$ is the class of $\eta-$uniformly convex functions \cite{kanas} also see (\cite{kanas1,kanas2}) and $1-UCV(0)=UCV$ is the class of uniformly convex functions defined by Goodman \cite{goodman} and ronning \cite{ronning}, respectively. The radius of $\eta-$uniform convexity of order $\rho$ is defined  in \cite{deniz1}.
$$r^{\eta-ucv(\rho)}_{f}=\sup \left\{r \in (0,r_{f}):\Re \left\{1+\frac{zf^{\prime\prime}(z)}{f^{\prime}(z)}\right\}>\eta \left|\frac{zf^{\prime\prime}(z)}{f^{\prime}(z)}\right|+\rho; \eta \geq 0, \rho \in [0,1),z \in \mathbb{D}(r)\right\}.$$
 \begin{definition}
 Let $f(z)$ be in the form (\ref{uncv}). Then we say that $f$ is parabolic starlike function if and only if 
 $$\Re \left(\frac{zf^{\prime}(z)}{f(z)}\right)>\left|\frac{zf^{\prime}(z)}{f(z)}-1\right|, \quad
  (z \in \mathbb{D}).$$
In 1993, Ronning \cite{ronning} introduced  the class of parabolic starlike and it is denoted by $SP.$ The class $SP$ is a subclass of the class of starlike functions of order 1/2 and the class of strongly starlike functions of order 1/2.\\
A function is said to be the class of $\eta-$parabolic starlike function of order $\rho,$ denoted by $\eta-SP(\rho)$ in \cite{kanas} if 
\begin{equation}\label{sp}
 \Re \left(\frac{zf^{\prime}(z)}{f(z)}\right)> \eta\left|\frac{zf^{\prime}(z)}{f(z)}-1\right|+\rho, \quad \eta \geq 0, \rho \in [0,1),
  z \in \mathbb{D}.
\end{equation}
These classes generalizes to other classes. The class $\eta-SP(0)=\eta-SP$ is the class of $\eta-$parabolic starlike functions \cite{kanas} and $1-SP(0)=SP$ is the class of parabolic starlike function.
The radius of $\eta-$parabolic starlikeness of order $\rho$ is defined  in \cite{bohra}. 
$$r^{\eta-SP(\rho)}_{f}=\sup \left\{r \in (0,r_{f}):\Re \left(\frac{zf^{\prime}(z)}{f(z)}\right)> \eta\left|\frac{zf^{\prime}(z)}{f(z)}-1\right|+\rho;~\eta \geq 0, \rho \in [0,1),
  z \in \mathbb{D}(r)\right\}.$$
 \end{definition}
 
 \subsection{Geometric interpretation}
$f\in \eta-UCV(\rho)$ and $f \in \eta-SP(\rho)$ if and only if $1+\frac{zf^{\prime\prime}(z)}{f^{\prime}(z)}$ and $\frac{zf^{\prime}(z)}{f(z)}$ respectively take all the values in the conic domain $\mathcal{R}_{\eta,\rho}$ which is included in the right half plane such that
\begin{equation}\label{k444}
\mathcal{R}_{\eta,\rho}=\{w=u+iv:u>\eta \sqrt{(u-1)^{2}+v^{2}}+\rho\}.
\end{equation}
 Denote $\mathcal{P}(P_{\eta,\rho})~~(\eta\geq 0,0 \leq \rho<1),$ the family of functions $p,$ such that $p\in \mathcal{P}$ and $p\prec P_{\eta,\rho}$ in $\mathbb{D},$ where $\mathcal{P}$ denotes the class of Caratheodory functions and the functions $P_{\eta,\rho}$ maps the unit disk conformally onto the domain $\mathcal{R}_{\eta,\rho}$ such that $1 \in \mathcal{R}_{\eta,\rho}$ and $\partial \mathcal{R}_{\eta,\rho}$ is a curve defined by the equality$$\partial \mathcal{R}_{\eta,\rho}=\{u+iv:u^{2}=(\eta \sqrt{(u-1)^{2}+v^{2}}+\rho)^{2}\}.$$
From the elementary computations we see that $\partial \mathcal{R}_{\eta,\rho}$ represents the conic sections symmetric about real axis.
\begin{itemize}
\item{} $\mathcal{R}_{\eta,\rho}$  curve reduces to the imaginary axis for $\eta=0.$
\item{}$\mathcal{R}_{\eta,\rho}$ is an elliptic region for $\eta>1.$
\item{} $\mathcal{R}_{\eta,\rho}$ is a parabolic domain for $\eta=1.$
\item{}$\mathcal{R}_{\eta,\rho}$ is a hyperbolic domain for  $0<\eta<1.$
\end{itemize}

\begin{definition}
Let $f(z)$ be the form (\ref{uncv}). For $\alpha \in \mathbb{R}$ and $\rho \in [0,1).$ Then we say that $f$ is $\alpha-$convex of order $\rho$ in $\mathbb{D}(r)$ if and only if
$$\Re \left((1-\alpha)\frac{zf^{\prime}(z)}{f(z)}+\alpha \left(1+\frac{zf^{\prime\prime}(z)}{f^{\prime}(z)}\right)\right)>\rho, \quad z \in \mathbb{D}.$$
\end{definition}
The radius of $\alpha-$convexity of order $\rho$ of the function $f$ is defined by the real number,
$$r_{\alpha,\rho}(f)=\sup\left\{r \in (0,r_{f}):\Re \left((1-\alpha)\frac{zf^{\prime}(z)}{f(z)}+\alpha \left(1+\frac{zf^{\prime\prime}(z)}{f^{\prime}(z)}\right)\right)>\rho; \rho \in [0,1), z \in \mathbb{D}(r)\right\}.$$
 The concept of $\alpha-$convexity is defined in \cite{mocanu1}.  Radius of $\alpha-$convexity of order $\rho$ is the generalization of the radius of starlikeness of order $\rho$ and of the radius of convexity of order $\rho.$ We have $r_{0,\rho}(f)=r^{*}_{\rho}(f)$ and $r_{1,\rho}(f)=r^{c}_{\rho}(f).$ For the detailed treatment on starlike, convex and $\alpha-$convex functions we refer to \cite{mocanu,duren, baricz6,caglar,mocanu1} and to the references therein.

 The above definitions are used to determine the radii of $\eta-$uniformly convexity, $\alpha-$convexity and $\eta-$parabolic starlikeness of order $\rho$ for the functions of the form (\ref{uncv}). Also we will need the following lemma in the sequel.
 \begin{lemma}\cite{deniz}\label{ucv1}
 If $a>b>r\geq |z|,$ and $\lambda \in [0,1],$ then 
 \begin{equation}\label{uc1}
 \left|\frac{z}{b-z}-\lambda \frac{z}{a-z}\right| \leq \frac{r}{b-r}-\lambda \frac{r}{a-r}.
 \end{equation}
The followings  are very simple consequences of the inequality
 \begin{equation}\label{uc2}
\Re  \left(\frac{z}{b-z}-\lambda \frac{z}{a-z}\right) \leq \frac{r}{b-r}-\lambda \frac{r}{a-r}
 \end{equation}
 and 
 \begin{equation}\label{uc3}
\Re  \left(\frac{z}{b-z}\right)\leq \left|\frac{z}{b-z}\right| \leq \frac{r}{b-r}.
 \end{equation}
 \end{lemma}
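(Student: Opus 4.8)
The plan is to reduce the entire statement to a single elementary observation: after the left-hand side of \eqref{uc1} is expanded as a power series in $z$, all of its coefficients turn out to be nonnegative, and for such a series the modulus on $\mathbb{D}(r)$ is maximized at the positive boundary point $z=r$.

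First I would expand, for $|z|\le r<b<a$,
\[
\frac{z}{b-z}-\lambda\frac{z}{a-z}=\sum_{n\ge 1}\left(\frac{1}{b^{n}}-\frac{\lambda}{a^{n}}\right)z^{n},
\]
using the two geometric series $\frac{z}{b-z}=\sum_{n\ge 1}(z/b)^{n}$ and $\frac{z}{a-z}=\sum_{n\ge 1}(z/a)^{n}$, which converge absolutely on $\mathbb{D}(r)$ because $r<b<a$. The decisive step is the sign of the coefficients: since $0<b<a$ we have $b^{-n}\ge a^{-n}$, and because $\lambda\in[0,1]$ this forces $b^{-n}-\lambda a^{-n}\ge a^{-n}-\lambda a^{-n}\ge 0$ for every $n\ge 1$.

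Given the nonnegativity of the coefficients, the triangle inequality combined with $|z|\le r$ yields, for every $z\in\mathbb{D}(r)$,
\[
\left|\frac{z}{b-z}-\lambda\frac{z}{a-z}\right|\le\sum_{n\ge 1}\left(\frac{1}{b^{n}}-\frac{\lambda}{a^{n}}\right)|z|^{n}\le\sum_{n\ge 1}\left(\frac{1}{b^{n}}-\frac{\lambda}{a^{n}}\right)r^{n},
\]
where the term-by-term estimate is legitimate because the series is dominated by $\sum_{n\ge 1}(r/b)^{n}<\infty$. Summing the two resulting geometric series gives exactly $\frac{r}{b-r}-\lambda\frac{r}{a-r}$, which is \eqref{uc1}. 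Inequality \eqref{uc2} then follows from $\Re w\le|w|$, and \eqref{uc3} is the special case $\lambda=0$ of \eqref{uc1}, again together with $\Re w\le|w|$.

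I do not expect a real obstacle; the one point that needs care is that one must pass to the series expansion \emph{before} estimating. A brute-force triangle-inequality bound on the single fraction $\dfrac{z\bigl((a-\lambda b)-(1-\lambda)z\bigr)}{(b-z)(a-z)}$ only produces $\dfrac{r\bigl((a-\lambda b)+(1-\lambda)r\bigr)}{(b-r)(a-r)}$, whereas the target value equals $\dfrac{r\bigl((a-\lambda b)-(1-\lambda)r\bigr)}{(b-r)(a-r)}$; the sign discrepancy in the $(1-\lambda)r$ term shows that the sharp bound genuinely relies on the cancellation already encoded in the nonnegative Taylor coefficients.
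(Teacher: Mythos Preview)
Your argument is correct: the geometric-series expansion gives coefficients $b^{-n}-\lambda a^{-n}$, and since $0<b<a$ and $\lambda\in[0,1]$ these are nonnegative, so the triangle inequality applied termwise yields \eqref{uc1}; the consequences \eqref{uc2} and \eqref{uc3} follow exactly as you say.

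There is nothing to compare against, however, because the paper does not supply a proof of this lemma. It is quoted verbatim from the cited reference of Deniz and Sz\'asz and used as a black box in the subsequent theorems. Your power-series argument is a clean self-contained proof; the closing remark about why a direct modulus bound on the combined fraction fails is accurate and nicely isolates where the hypothesis $\lambda\le 1$ is actually used.
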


\subsection{The three parameter generalization of Mittag-Leffler function}
 Consider the function $\phi(\omega,z)$ defined by, 
 \begin{equation}\label{radd1} 
\phi(\omega,z)=\sum_{k\geq 0}\frac{z^{n}}{\Gamma(\omega k+1)},\quad \omega>0, z \in \mathbb{D},
 \end{equation}
 where $\Gamma$ denotes the Gamma function.  In 1903, Mittag-Leffler \cite{mittag} was introduced this function and thereafter it is  known as the Mittag-Leffler function. Depending upon the two complex parameters $\omega$ and $\beta,$ in 1905 Wiman \cite{wiman}  introduced the another version of Mittag-Leffler function which having similar properties with the function $\phi(\omega,z).$ It is defined by the following series,
 \begin{equation}\label{radd2}
 \phi(\omega,\beta,z)=\sum_{k\geq 0} \frac{z^{k}}{\Gamma(\omega k+\beta)},\quad \omega,\beta>0,z \in \mathbb{D}.
 \end{equation}

 In 1971, Prabhakar \cite{prabhakar} introduced the three parameter function $\phi(\omega,\beta,\gamma,z)$ in the form of
 \begin{equation}\label{radd3}
 \phi(\omega,\beta.\gamma,z)=\sum_{k \geq 0}\frac{(\gamma)_{k}z^{k}}{k!\Gamma(\omega k+\beta)},\quad \omega,\beta,\gamma>0, z \in \mathbb{D},
 \end{equation}
 where $(\gamma)_{k}$ denotes the Pochhammer symbol (or shifted factorial) given in terms of the Gamma function by $(d)_{k}=\Gamma (d+k)/\Gamma(d).$
Some particular cases of $\phi(\omega,\beta,\gamma,z)$ are given in \cite{prajapati}.
Observe that the function $z \rightarrow \phi(\omega,\beta,\gamma,-z^{2})$ does not belong to $\mathcal{A}.$ Thus first we perform some natural normalization. We define three functions originating from $\phi(\omega,\beta,\gamma,z):$

$$f_{\omega,\beta,\gamma}(z)=\left(z^{\beta}\Gamma(\beta)\phi(\omega,\beta,\gamma,-z^{2})\right)^{1/\beta},$$
$$g_{\omega,\beta,\gamma}(z)=z\Gamma(\beta)\phi(\omega,\beta,\gamma,-z^{2}),$$
$$h_{\omega,\beta,\gamma}(z)=z\Gamma(\beta)\phi(\omega,\beta,\gamma,-z).$$
Obviously these functions belong to the class $\mathcal{A}.$ Of course, there exist infinitely many other normalization. The main motivation to consider the above ones is the studied normalization in the literature of Bessel, Struve, Lommel and Wright functions.
\subsection{ Preliminary result on the Mittag-Leffler function}
First we define, three transformations mapping the set
$\left\{\left(\frac{1}{\omega},\beta\right):\omega>1,\beta>0\right\}$ into itself:
$$A:\left(\frac{1}{\omega},\beta\right)\to \left(\frac{1}{2\omega},\beta\right), \ \ \ B:\left(\frac{1}{\omega},\beta\right)\to \left(\frac{1}{2\omega},\omega+\beta\right),$$
$$C:\left(\frac{1}{\omega},\beta\right)\to \left\{\begin{array}{ll}\displaystyle\left(\frac{1}{\omega},\beta-1\right),&\mbox{if}\ \ \beta>1\\ \\ \displaystyle\left(\frac{1}{\omega},\beta\right),&\mbox{if}\ \ 0<\beta\leq1\end{array}\right..$$
We put $W_b=A(W_a)\cup B(W_a),$ where
$$W_a=\left\{\left(\frac{1}{\omega},\beta\right):1<\omega<2,\beta\in[\omega-1,1]\cup[\omega,2]\right\},$$
and $W_i$ is denoted as the smallest set containing $W_b$ and invariant with respect to $A,$ $B$ and $C,$ that is, if $(a,b)\in W_i,$ then
$A(a,b),$ $B(a,b),$ $C(a,b)\in W_i.$ By using a result of Peresyolkova \cite{peres}, Kumar and Pathan \cite{kumar} recently proved that if
$\left(\frac{1}{\omega},\beta\right)\in W_i$ and $\gamma>0,$ then all zeros of the generalized Mittag-Leffler function $\phi(\omega,\beta,\gamma,z)$ are real and negative. It is worth mentioning that
the reality of the zeros as well as their distribution in the case of $\gamma=1,$ that is of Wiman's extension $\phi(\omega,\beta,z),$ has a rich literature. For more details see  the papers of Dzhrbashyan \cite{dzhr}, Ostrovski\u{i} and Peresyolkova \cite{ostrov}, Popov and Sedletskii \cite{sedlet}.
\section{Main Results}
\subsection{Radii of $\eta-$ uniformly convexity of order $\rho$ of functions $f_{\omega,\beta,\gamma},g_{\omega,\beta,\gamma}$ and $h_{\omega,\beta,\gamma}$ }
Now, our aim is to investigate the radii of $\eta-$uniformly convexity of order $\rho$ of the normalized forms of the generalized three parameter Mittag-Leffler function, that is of  $f_{\omega,\beta,\gamma},g_{\omega,\beta,\gamma}$ and $h_{\omega,\beta,\gamma}.$ The technique of determining the radii of $\eta-$uniformly convexity of order $\rho$ in the next theorem follows the ideas from \cite{deniz}.
\begin{theorem}
Let $(\frac{1}{\omega},\beta) \in W_{i},~~\gamma>0$ and $\rho \in [0,1).$
\begin{itemize}
\item[\bf{a}.] The radius of $\eta-$uniform convexity of order $\rho$ of the function $f_{\omega,\beta,\gamma}$  is the smallest positive root of the equation
 $$1-\rho+(1+\eta)\left(\frac{r\Psi^{\prime\prime}_{\omega,\beta,\gamma}(r)}{\Psi^{\prime}_{\omega,\beta,\gamma}(r)}+\left(\frac{1}{\beta}-1\right)\frac{r\Psi^{\prime}_{\omega,\beta,\gamma}(r)}{\Psi_{\omega,\beta,\gamma}(r)}\right)=0,$$
 where $\Psi_{\omega,\beta,\gamma}(z)=z^{\beta}\lambda_{\omega,\beta,\gamma}(z).$
 \item[\bf{b}.] The radius of $\eta-$uniform convexity of order $\rho$ of the function $g_{\omega,\beta,\gamma}$ is the smallest  positive root of the equation
 $$1-\rho+(1+\eta)\frac{rg^{\prime\prime}_{\omega,\beta,\gamma}(r)}{g^{\prime}_{\omega,\beta,\gamma}(r)}=0.$$
 \item[\bf{c}.] The radius of $\eta-$uniform convexity of order $\rho$ of $h_{\omega,\beta,\gamma}$ is the smallest  positive root of the equation
 $$1-\rho+(1+\eta)\frac{rh^{\prime\prime}_{\omega,\beta,\gamma}(r)}{h^{\prime}_{\omega,\beta,\gamma}(r)}=0.$$
\end{itemize}
\end{theorem}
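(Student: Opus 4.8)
The plan is to combine the Hadamard factorizations of the three functions and of their derivatives with the elementary estimates of Lemma~\ref{ucv1}, reducing the membership in $\eta-UCV(\rho)$ on each circle $|z|=r$ to a one-variable inequality, and then to identify the radius as the first positive zero of the resulting real function. Since $\left(\tfrac{1}{\omega},\beta\right)\in W_{i}$ and $\gamma>0$, the quoted result of Kumar and Pathan gives that $\phi(\omega,\beta,\gamma,z)$ has only real negative zeros; as $\omega>1$ its order equals $1/\omega<1$, so $\phi(\omega,\beta,\gamma,z)$, and hence also $\phi(\omega,\beta,\gamma,-z^{2})$, lies in the Laguerre--P\'olya class, which is closed under differentiation. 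I would therefore use (as in \cite{prajapati}) the product representations
\begin{align*}
\Psi_{\omega,\beta,\gamma}(z)&=z^{\beta}\prod_{n\ge1}\Bigl(1-\tfrac{z^{2}}{x_{n}^{2}}\Bigr), & \Psi'_{\omega,\beta,\gamma}(z)&=\beta z^{\beta-1}\prod_{n\ge1}\Bigl(1-\tfrac{z^{2}}{\xi_{n}^{2}}\Bigr),\\
g_{\omega,\beta,\gamma}(z)&=z\prod_{n\ge1}\Bigl(1-\tfrac{z^{2}}{x_{n}^{2}}\Bigr), & g'_{\omega,\beta,\gamma}(z)&=\prod_{n\ge1}\Bigl(1-\tfrac{z^{2}}{y_{n}^{2}}\Bigr),\\
h_{\omega,\beta,\gamma}(z)&=z\prod_{n\ge1}\Bigl(1-\tfrac{z}{\mu_{n}}\Bigr), & h'_{\omega,\beta,\gamma}(z)&=\prod_{n\ge1}\Bigl(1-\tfrac{z}{\nu_{n}}\Bigr),
\end{align*}
where $\Psi_{\omega,\beta,\gamma}=f_{\omega,\beta,\gamma}^{\beta}$, all of $x_{n},\xi_{n},y_{n},\mu_{n},\nu_{n}$ are positive and, by Rolle's theorem, $\xi_{1}<x_{1}<\xi_{2}<x_{2}<\cdots$, $y_{1}<x_{1}<y_{2}<x_{2}<\cdots$ and $\nu_{1}<\mu_{1}<\nu_{2}<\mu_{2}<\cdots$.

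From $f_{\omega,\beta,\gamma}=\Psi_{\omega,\beta,\gamma}^{1/\beta}$ one has $\dfrac{zf''_{\omega,\beta,\gamma}(z)}{f'_{\omega,\beta,\gamma}(z)}=\dfrac{z\Psi''_{\omega,\beta,\gamma}(z)}{\Psi'_{\omega,\beta,\gamma}(z)}+\left(\tfrac{1}{\beta}-1\right)\dfrac{z\Psi'_{\omega,\beta,\gamma}(z)}{\Psi_{\omega,\beta,\gamma}(z)}$, so logarithmic differentiation of the products above gives
$$1+\frac{zf''_{\omega,\beta,\gamma}(z)}{f'_{\omega,\beta,\gamma}(z)}=1-\sum_{n\ge1}\frac{2z^{2}}{\xi_{n}^{2}-z^{2}}+\left(1-\frac{1}{\beta}\right)\sum_{n\ge1}\frac{2z^{2}}{x_{n}^{2}-z^{2}},$$
$$1+\frac{zg''_{\omega,\beta,\gamma}(z)}{g'_{\omega,\beta,\gamma}(z)}=1-\sum_{n\ge1}\frac{2z^{2}}{y_{n}^{2}-z^{2}},\qquad 1+\frac{zh''_{\omega,\beta,\gamma}(z)}{h'_{\omega,\beta,\gamma}(z)}=1-\sum_{n\ge1}\frac{z}{\nu_{n}-z}.$$
Now fix $r$ strictly below the first zero of the pertinent derivative ($\xi_{1}$, $y_{1}$ or $\nu_{1}$), let $|z|=r$, and write $\varphi$ for whichever of $f_{\omega,\beta,\gamma},g_{\omega,\beta,\gamma},h_{\omega,\beta,\gamma}$ is under discussion. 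For $g_{\omega,\beta,\gamma}$ and $h_{\omega,\beta,\gamma}$ one bounds each summand, in modulus and in real part, by its value at $z=r$ by \eqref{uc3}; for $f_{\omega,\beta,\gamma}$ with $\beta\ge1$ one pairs the $n$th terms and applies \eqref{uc1}--\eqref{uc2} with $a=x_{n}^{2}$, $b=\xi_{n}^{2}$ and $\lambda=1-\tfrac{1}{\beta}\in[0,1)$ (admissible because $\xi_{n}<x_{n}$), while for $0<\beta<1$ the coefficient $1-\tfrac{1}{\beta}$ is negative and the two series are estimated separately by \eqref{uc3} again. In every case, writing $\Phi(r)$ for the value of $1+\tfrac{r\varphi''(r)}{\varphi'(r)}$ on the positive axis, one obtains $\Re\left(1+\tfrac{z\varphi''(z)}{\varphi'(z)}\right)\ge\Phi(r)$ and $\left|\tfrac{z\varphi''(z)}{\varphi'(z)}\right|\le1-\Phi(r)$, whence
$$\Re\left(1+\frac{z\varphi''(z)}{\varphi'(z)}\right)-\eta\left|\frac{z\varphi''(z)}{\varphi'(z)}\right|-\rho\ \ge\ (1+\eta)\Phi(r)-\eta-\rho\ =\ 1-\rho+(1+\eta)\frac{r\varphi''(r)}{\varphi'(r)}.$$

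Let $u(r)$ denote this last expression, which for part a equals the left side of the stated equation because $\tfrac{rf''_{\omega,\beta,\gamma}(r)}{f'_{\omega,\beta,\gamma}(r)}=\tfrac{r\Psi''_{\omega,\beta,\gamma}(r)}{\Psi'_{\omega,\beta,\gamma}(r)}+\left(\tfrac{1}{\beta}-1\right)\tfrac{r\Psi'_{\omega,\beta,\gamma}(r)}{\Psi_{\omega,\beta,\gamma}(r)}$. The estimate above shows that $\varphi$ lies in $\eta-UCV(\rho)$ on $\mathbb{D}(r)$ whenever $u(\varrho)>0$ for all $\varrho\le r$. Since $u(0^{+})=1-\rho>0$, and since, as $r$ increases to the first zero of the relevant derivative, the $n=1$ term of $r\varphi''(r)/\varphi'(r)$ tends to $-\infty$ while the remainder stays bounded, the continuous function $u$ has a smallest positive zero $r^{\star}$ with $u>0$ on $(0,r^{\star})$; hence $\varphi$ is $\eta-UCV(\rho)$ on $\mathbb{D}(r^{\star})$. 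For sharpness one tests the real point $z=r^{\star}$, where $1+\tfrac{r^{\star}\varphi''(r^{\star})}{\varphi'(r^{\star})}=\Phi(r^{\star})\le1$ (each summand above is nonnegative, using $\xi_{n}\le x_{n}$ when $\beta\ge1$), so the left side of the defining inequality of $\eta-UCV(\rho)$ equals $(1+\eta)\Phi(r^{\star})-\eta-\rho=u(r^{\star})=0$; this violates the strict inequality at a point of every disk $\mathbb{D}(r)$ with $r>r^{\star}$. Therefore $r^{\star}$ is precisely the radius of $\eta$-uniform convexity of order $\rho$, which is the smallest positive root of the corresponding equation.

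The step I expect to be the main obstacle is the first one: establishing the Hadamard factorization of $\Psi'_{\omega,\beta,\gamma}$ with only real zeros, together with the interlacing $\xi_{1}<x_{1}<\xi_{2}<\cdots$. Because $\Psi_{\omega,\beta,\gamma}(z)=z^{\beta}\lambda_{\omega,\beta,\gamma}(z)$ fails to be entire for non-integral $\beta$, the Laguerre--P\'olya argument cannot be applied to it directly and must be carried out on the even entire factor of $\Psi'_{\omega,\beta,\gamma}$, which is precisely where the hypothesis $\left(\tfrac{1}{\omega},\beta\right)\in W_{i}$ and the reality of the zeros of $\phi(\omega,\beta,\gamma,\cdot)$ are used; the factorizations of $g_{\omega,\beta,\gamma}$ and $h_{\omega,\beta,\gamma}$ are by contrast immediate, since these are locally uniform limits of real polynomials with only real zeros. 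The remaining care lies in the split into the cases $\beta\ge1$ and $0<\beta<1$ forced by the sign of $1-\tfrac{1}{\beta}$, and in checking that $r^{\star}$ lies before the first zero of the relevant derivative so that all the above expressions are meaningful.
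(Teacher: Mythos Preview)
Your proposal is correct and follows essentially the same route as the paper: both start from the Hadamard factorizations and interlacing of zeros of $\Psi_{\omega,\beta,\gamma}$, $\Psi'_{\omega,\beta,\gamma}$, $g'_{\omega,\beta,\gamma}$, $h'_{\omega,\beta,\gamma}$ (which the paper simply quotes from \cite{prajapati} rather than re-deriving via the Laguerre--P\'olya argument you sketch), apply Lemma~\ref{ucv1} termwise with the case split on $\beta$ to obtain $\Re\bigl(1+z\varphi''/\varphi'\bigr)-\eta\lvert z\varphi''/\varphi'\rvert-\rho\ge 1-\rho+(1+\eta)r\varphi''(r)/\varphi'(r)$, and then use the limits at $r=0$ and at the first zero of the derivative to locate the unique root. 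The only cosmetic difference is that the paper invokes the minimum principle for harmonic functions and proves strict monotonicity of $u(r)$ explicitly, whereas you argue sharpness by direct evaluation at $z=r^{\star}$; both yield the same conclusion.
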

\begin{proof}
\begin{itemize}
\item[\bf{a}.] Let $\varsigma_{\omega,\beta,\gamma,n}$ and $\varsigma^{\prime}_{\omega,\beta,\gamma,n}$ be the $n$th positive roots of $\Psi_{\omega,\beta,\gamma}$ and $\Psi^{\prime}_{\omega,\beta,\gamma}$ respectively. In \cite[Theorem-3(a)]{prajapati}, the following equality was demonstrated,
$$ 1+\frac{zf^{\prime\prime}_{\omega,\beta,\gamma}(z)}{f^{\prime}_{\omega,\beta,\gamma}(z)}=1-\left(\frac{1}{\beta}-1\right)\sum_{n \geq 1}\frac{2z^{2}}{\varsigma^{2}_{\omega,\beta,\gamma,n}-z^{2}}-\sum_{n \geq 1}\frac{2z^{2}}{\varsigma^{\prime 2}_{\omega,\beta,\gamma,n}-z^{2}}.$$
In order to prove this theorem we need to investigate two different cases such as $\beta \in (0,1]$ and $\beta>1.$ First suppose $\beta \in (0,1].$ In this case, with the help of (\ref{uc3}) for $\beta \in (0,1]$, we have
\begin{align}\label{rad17}
\Re \left(1+\frac{zf^{\prime\prime}_{\omega,\beta,\gamma}(z)}{f^{\prime}_{\omega,\beta,\gamma}(z)}\right)&\geq 1-\left(\frac{1}{\beta}-1\right)\sum_{n \geq 1}\frac{2r^{2}}{\varsigma^{2}_{\omega,\beta,\gamma,n}-r^{2}}-\sum_{n \geq 1}\frac{2r^{2}}{\varsigma^{\prime 2}_{\omega,\beta,\gamma,n}-r^{2}}\\ \nonumber &=1+\frac{rf^{\prime\prime}_{\omega,\beta,\gamma}(r)}{f^{\prime}_{\omega,\beta,\gamma}(r)},
\end{align}
 where $|z|\leq r<\varsigma^{\prime}_{\omega,\beta,\gamma,1}<\varsigma_{\omega,\beta,\gamma,1},$ holds true for $|z|=r.$  Moreover, in view of (\ref{uc3}) and $\eta \geq 0$, we get,
 \begin{eqnarray}\label{uc4}
\eta\left|\frac{zf^{\prime\prime}_{\omega,\beta,\gamma}(z)}{f^{\prime}_{\omega,\beta,\gamma}(z)}\right| &=&\eta\left|\left(\frac{1}{\beta}-1\right)\sum_{n \geq 1}\frac{2z^{2}}{\varsigma^{2}_{\omega,\beta,\gamma,n}-z^{2}}-\sum_{n \geq 1}\frac{2z^{2}}{\varsigma^{\prime 2}_{\omega,\beta,\gamma,n}-z^{2}}\right|\\ 
\nonumber &\leq& \eta\sum_{n \geq 1}\left(\left(\frac{1}{\beta}-1\right)\frac{2r^{2}}{\varsigma^{2}_{\omega,\beta,\gamma,n}-r^{2}}-\frac{2r^{2}}{\varsigma^{\prime 2}_{\omega,\beta,\gamma,n}-r^{2}}\right)\\ \nonumber &=&-\eta\frac{rf^{\prime\prime}_{\omega,\beta,\gamma}(r)}{f^{\prime}_{\omega,\beta,\gamma}(r)},
 \end{eqnarray}
where $|z|\leq r< \varsigma^{\prime}_{\omega,\beta,\gamma,1}<\varsigma_{\omega,\beta,\gamma,1}.$ In view of the inequality (\ref{uc2}), we obtain that  (\ref{rad17}) and (\ref{uc4}) are also valid when $\beta \geq 1$   for all $z \in (0,\varsigma^{\prime}_{\omega,\beta,\gamma,1}).$ Here we used that the zeros of $\varsigma_{\omega,\beta,\gamma,1}$ and $\varsigma^{\prime}_{\omega,\beta,\gamma,1}$ interlace, that is, we have $\varsigma^{\prime}_{\omega,\beta,\gamma,1}<\varsigma_{\omega,\beta,\gamma,1}.$ 
 From (\ref{rad17}) and (\ref{uc4}), we have 
 \begin{equation}\label{uc5}
\Re \left(1+\frac{zf^{\prime\prime}_{\omega,\beta,\gamma}(z)}{f^{\prime}_{\omega,\beta,\gamma}(z)}\right)-\eta \left|\frac{zf^{\prime\prime}_{\omega,\beta,\gamma}(z)}{f^{\prime}_{\omega,\beta,\gamma}(z)}\right|-\rho\geq  1-\rho+(1+\eta) \frac{rf^{\prime\prime}_{\omega,\beta,\gamma}(z)}{f^{\prime}_{\omega,\beta,\gamma}(z)},  
 \end{equation}
where $|z|\leq r < \varsigma^{\prime}_{\omega,\beta,\gamma}$ and $\rho \in [0,1),$ $\eta \geq 0.$
 Due to minimum principle for harmonic functions, equality holds if and only if $z=r.$ Now, the above deduced inequality imply for $r \in (0, \varsigma^{\prime}_{\omega,\beta,\gamma,1}).$
 $$\inf_{|z|<r}\left\{\Re \left(1+\frac{zf^{\prime\prime}_{\omega,\beta,\gamma}(z)}{f^{\prime}_{\omega,\beta,\gamma}(z)}\right)-\eta\left|\frac{zf^{\prime\prime}_{\omega,\beta,\gamma}(z)}{f^{\prime}_{\omega,\beta,\gamma}(z)}\right|-\rho\right\}=1-\rho+(1+\eta)\frac{rf^{\prime\prime}_{\omega,\beta,\gamma}(r)}{f^{\prime}_{\omega,\beta,\gamma}(r)}.$$
 The function $u_{\omega,\beta,\gamma}:(0,\varsigma^{\prime}_{\omega,\beta,\gamma,1}) \rightarrow \mathbb{R},$ defined by 
$$u_{\omega,\beta,\gamma}(r)=1-\rho+(1+\eta)\frac{rf^{\prime\prime}_{\omega,\beta,\gamma}(r)}{f^{\prime}_{\omega,\beta,\gamma}(r)}=1-\rho-(1+\eta)\sum_{n\geq 1}\left(\frac{2r^{2}}{\varsigma^{\prime 2}_{\omega,\beta,\gamma,n}-r^{2}}-\left(1-\frac{1}{\beta}\right)\frac{2r^{2}}{\varsigma^{2}_{\omega,\beta,\gamma,n}-r^{2}}\right)$$
 is strictly decreasing when $\beta \in (0,1]$ and $\eta \geq 0,$ $\rho \in [0,1).$ Moreover, it is also strictly decreasing when $\beta >1$ since
 \begin{eqnarray*}
 u^{\prime}_{\omega,\beta,\gamma}(r)&=&-(1+\eta)\left(\left(\frac{1}{\beta}-1\right)\sum_{n\geq 1}\frac{8r\varsigma^{2}_{\omega,\beta,\gamma,n}}{(\varsigma^{2}_{\omega,\beta,\gamma,n}-r^{2})^{2}}-\sum_{n\geq 1}\frac{8r\varsigma^{\prime 2}_{\omega,\beta,\gamma,n}}{(\varsigma^{\prime 2}_{\omega,\beta,\gamma,n}-r^{2})^{2}}\right)\\ &<&(1+\eta)\sum_{n\geq 1}\left(\frac{8r\varsigma^{2}_{\omega,\beta,\gamma,n}}{(\varsigma^{2}_{\omega,\beta,\gamma,n}-r^{2})^{2}}-\frac{8r\varsigma^{\prime 2}_{\omega,\beta,\gamma,n}}{(\varsigma^{\prime 2}_{\omega,\beta,\gamma,n}-r^{2})^{2}}\right)<0,
 \end{eqnarray*}
 for $r \in (0, \varsigma^{\prime}_{\omega,\beta,\gamma,1}).$ 
  
 Observe that $\lim_{r\searrow 0} u_{\omega,\beta,\gamma}(r)=1-\rho>0$ and $\lim_{r\nearrow \varsigma^{\prime}_{\omega,\beta,\gamma}} u_{\omega,\beta,\gamma}(r)=-\infty.$ Thus it follows that the equation  
  
 $$1+(1+\eta)\frac{rf^{\prime\prime}_{\omega,\beta,\gamma}(r)}{f^{\prime}_{\omega,\beta,\gamma}(r)}=\rho, \quad \eta \geq 0, \rho \in [0,1)$$ has a unique root  situated in $r_{1}\in (0,\varsigma^{\prime}_{\omega,\beta,\gamma,1}).$
 \item[\bf{b}.]
 Let $\upsilon_{\omega,\beta,\gamma,n}$ be the $n$th positive zero of the function  $g^{\prime}_{\omega,\beta,\gamma}.$ In \cite[Theorem-3 (b)]{prajapati}, the following equality was proved:
\begin{equation}\label{uc6} 
 1+\frac{zg^{\prime\prime}_{\omega,\beta,\gamma}(z)}{g^{\prime}_{\omega,\beta,\gamma}(z)}=1-\sum_{n \geq 1}\frac{2z^{2}}{\upsilon^{2}_{\omega,\beta,\gamma,n}-z^{2}},
 \end{equation}
 and it was shown in \cite{prajapati}.
 \begin{equation}\label{uc7}
 \Re \left(1+\frac{zg^{\prime\prime}_{\omega,\beta,\gamma}(z)}{g^{\prime}_{\omega,\beta,\gamma}(z)}\right)\geq 1-\sum_{n \geq 1}\frac{2r^{2}}{\upsilon^{2}_{\omega,\beta,\gamma,n}-r^{2}},\quad
  |z|\leq r< \upsilon_{\omega,\beta,\gamma,1}.
 \end{equation}
From equality (\ref{uc6}) and $\eta \geq 0,$ we have,
 \begin{align}\label{uc8}
\eta \left|\frac{zg^{\prime\prime}_{\omega,\beta,\gamma}(z)}{g^{\prime}_{\omega,\beta,\gamma}(z)}\right|= \eta\left|\sum_{n \geq 1}\frac{2z^{2}}{\upsilon^{2}_{\omega,\beta,\gamma,n}-z^{2}}\right|= -\eta\frac{rg^{\prime\prime}_{\omega,\beta,\gamma}(r)}{g^{\prime}_{\omega,\beta,\gamma}(r)},~~|z|\leq r < \upsilon_{\omega,\beta,\gamma,1}.
\end{align}
 By using the inequalities  (\ref{uc7}) and (\ref{uc8}) we obtain 
\begin{equation*}
\Re \left(1+\frac{zg^{\prime\prime}_{\omega,\beta,\gamma}(z)}{g^{\prime}_{\omega,\beta,\gamma}(z)}\right)-\eta\left|\frac{zg^{\prime\prime}_{\omega,\beta,\gamma}(z)}{g^{\prime}_{\omega,\beta,\gamma}(z)}\right|-\rho\geq 1-\rho+(1+\eta)\frac{rg^{\prime\prime}_{\omega,\beta,\gamma}(r)}{g^{\prime}_{\omega,\beta,\gamma}(r)},
\end{equation*}
$|z|\leq r < \upsilon_{\omega,\beta,\gamma,1},\eta \geq 0,\rho \in [0,1)$.
According to minimum principle for harmonic functions, equality holds if and only if  $z=r.$ Thus, for $ r \in (0,\upsilon_{\omega,\beta,\gamma,1}),$ $\eta \geq 0$ and $\rho \in [0,1)$  we get
 $$\inf_{|z|<r}\left\{\Re \left(1+\frac{zg^{\prime\prime}_{\omega,\beta,\gamma}(z)}{g^{\prime}_{\omega,\beta,\gamma}(z)}\right)-\eta\left|\frac{zg^{\prime\prime}_{\omega,\beta,\gamma}(z)}{g^{\prime}_{\omega,\beta,\gamma}(z)}\right|-\rho\right\}=1-\rho+(1+\eta)\frac{rg^{\prime\prime}_{\omega,\beta,\gamma}(r)}{g^{\prime}_{\omega,\beta,\gamma}(r)}.$$
The function $u_{\omega,\beta,\gamma
}:(0,\upsilon_{\omega,\beta,\gamma,1})\rightarrow \mathbb{R},$ defined by
$$u_{\omega,\beta,\gamma}(r)=1-\rho+(1+\eta)\frac{rg^{\prime\prime}_{\omega,\beta,\gamma}(r)}{g^{\prime}_{\omega,\beta,\gamma}(r)},$$
is strictly decreasing and  
$\lim_{r\searrow 0} u_{\omega,\beta,\gamma}(r)=1-\rho>0$ and $\lim_{r\nearrow \upsilon_{\omega,\beta,\gamma}} u_{\omega,\beta,\gamma}(r)=-\infty.$
Consequently, the equation $1+(1+\eta)\frac{rg^{\prime\prime}_{\omega,\beta,\gamma}(r)}{g^{\prime}_{\omega,\beta,\gamma}(r)}=\rho,$ $\eta \geq 0$ and $\rho \in [0,1)$ has a unique root $r_{2}$ in $(0,\upsilon_{\omega,\beta,\gamma,1}).$
\item[\bf{c}.]
Let $\upsilon_{\omega,\beta,\gamma,n}$ denote the $n$th positive zero of the function  $h_{\omega,\beta,\gamma}(z)$. In \cite[Theorem-3(c)]{prajapati}, the following equations was  obtained
\begin{equation}\label{uc9}
\frac{zh^{\prime\prime}_{\omega,\beta,\gamma}(z)}{h^{\prime}_{\omega,\beta,\gamma}(z)}=-\sum_{n \geq 1}\frac{z}{\upsilon_{\omega,\beta,\gamma,n}-z},
\end{equation}
and in the same paper with the help of (\ref{uc9}) the following inequality was given 
\begin{equation}\label{uc10}
\Re\left(1+\frac{zh^{\prime\prime}_{\omega,\beta,\gamma}(z)}{h^{\prime}_{\omega,\beta,\gamma}(z)}\right)\geq 1+\frac{rh^{\prime\prime}_{\omega,\beta,\gamma}(r)}{h^{\prime}_{\omega,\beta,\gamma}(r)},\quad |z|<r<\upsilon_{\omega,\beta,\gamma,1}.
\end{equation}
From (\ref{uc9}), we have
\begin{align}\label{uc11}
\eta\left|\frac{zh^{\prime\prime}_{\omega,\beta,\gamma}(z)}{h^{\prime}_{\omega,\beta,\gamma}(z)}\right|=\eta\left|\sum_{n \geq 1}\frac{z}{\upsilon_{\omega,\beta,\gamma,n}-z}\right|=-\eta\frac{rh^{\prime\prime}_{\omega,\beta,\gamma}(r)}{h^{\prime}_{\omega,\beta,\gamma}(r)},\quad \eta \geq 0, |z|<r<\upsilon_{\omega,\beta,\gamma,1}.
\end{align}
From (\ref{uc10}) and (\ref{uc11}), we have
\begin{equation*}
\Re\left(1+\frac{zh^{\prime\prime}_{\omega,\beta,\gamma}(z)}{h^{\prime}_{\omega,\beta,\gamma}(z)}\right)-\eta\left|\frac{zh^{\prime\prime}_{\omega,\beta,\gamma}(z)}{h^{\prime}_{\omega,\beta,\gamma}(z)}\right|-\rho\geq 1-\rho+(1+\eta)\frac{rh^{\prime\prime}_{\omega,\beta,\gamma}(r)}{h^{\prime}_{\omega,\beta,\gamma}(r)},
\end{equation*}
$\quad |z|<r<\upsilon_{\omega,\beta,\gamma,1},\eta \geq 0, \rho \in [0,1).$
Due to the minimum principle for harmonic function and equality holds if and only if $z=r.$ Thus,  we have
$$\inf_{|z|<r}\left\{\Re \left(1+\frac{zh^{\prime\prime}_{\omega,\beta,\gamma}(z)}{h^{\prime}_{\omega,\beta,\gamma}(z)}\right)-\eta\left|\frac{zh^{\prime\prime}_{\omega,\beta,\gamma}(z)}{h^{\prime}_{\omega,\beta,\gamma}(z)}\right|-\rho\right\}=1-\rho+(1+\eta)\frac{rh^{\prime\prime}_{\omega,\beta,\gamma}(r)}{h^{\prime}_{\omega,\beta,\gamma}(r)}.$$ 
For every  $r \in (0, \upsilon_{\omega,\beta,\gamma,1}),$ $\eta \geq 0,$ and $\rho \in [0,1).$ Since the function $w_{\omega,\beta,\gamma}(r):(0, \upsilon_{\omega,\beta,\gamma,1})\rightarrow \mathbb{R}$ defined by
\end{itemize}
$$w_{\omega,\beta,\gamma}(r)=1-\rho-(1+\eta)\sum_{n\geq 1} \frac{r}{\upsilon_{\omega,\beta,\gamma,n}-r},$$
as decreasing on $(0,\upsilon_{\omega,\beta,\gamma,1})$
and  
$\lim_{r \searrow 0}w_{\omega,\beta,\gamma}(r)=1-\rho,\quad \lim_{r \nearrow \upsilon_{\omega,\beta,\gamma}}w_{\omega,\beta,\gamma}(r)=-\infty.$
It follows that the equation $w_{\omega,\beta,\gamma}(r)=0$ has a unique root $r_{3} \in (0, \upsilon_{\omega,\beta,\gamma,1})$ and this root is the radius of uniform convexity.
\end{proof}
\subsection{Radii of $\alpha-$convexity of order $\rho$ for the functions $f_{\omega,\beta,\gamma},g_{\omega,\beta,\gamma}$ and $h_{\omega,\beta,\gamma}$}
Now, we are going to investigate the radii of $\alpha-$convexity of order $\rho$ of the functions $f_{\omega,\beta,\gamma},g_{\omega,\beta,\gamma}$ and $h_{\omega,\beta,\gamma}.$ The technique used in the process of finding the radii of $\alpha-$convexity of order $\rho$ in the next theorem is based on the ideas from \cite{mocanu} and \cite{baricz6}. The results of the theorem are natural extensions of some recent results see \cite{prajapati}, where the special case of $\alpha=1$ and $\rho=0$ was considered  on the Mittag-Leffler function. For proving the main results we will use the following notation,
$$J(\alpha,u(z))=(1-\alpha)\frac{zu^{\prime}(z)}{u(z)}+\alpha \left(1+\frac{zu^{\prime\prime}(z)}{u^{\prime}(z)}\right).$$
\begin{theorem}\label{theorem1}
If $(\frac{1}{\omega},\beta) \in W_{i},\gamma>0,  \alpha \geq 0,\rho
\in [0,1),$
 then the radius of $\alpha-$convexity of order $\rho$ of the function $f_{\omega,\beta,\gamma}$  is the smallest positive root of the equation
 $$\alpha+\alpha\frac{r\Psi^{\prime\prime}_{\omega,\beta,\gamma}(r)}{\Psi^{\prime}_{\omega,\beta,\gamma}(r)}+\left(\frac{1}{\beta}-\alpha\right)\frac{r\Psi^{\prime}_{\omega,\beta,\gamma}(r)}{\Psi_{\omega,\beta,\gamma}(r)}=\rho,$$
 where $\Psi_{\omega,\beta,\gamma}(z)=z^{\beta}\lambda_{\omega,\beta,\gamma}(z).$  The radius of $\alpha-$convexity satisfies $r_{\alpha,\rho}(f_{\omega,\beta,\gamma})<\varsigma^{\prime}_{\omega,\beta,\gamma,1}<\varsigma_{\omega,\beta,\gamma,1},$ where $\varsigma_{\omega,\beta,\gamma,1}$ and $\varsigma^{\prime}_{\omega,\beta,\gamma,1}$ denote the first positive zeros of $\Psi_{\omega,\beta,\gamma}$ and $\Psi^{\prime}_{\omega,\beta,\gamma}$ respectively. Moreover the function $\alpha \rightarrow r_{\alpha,\rho}(f_{\omega,\beta,\gamma})$ is strictly decreasing on $[0,\infty)$ and  consequently, we have $r^{c}_{\rho}<r_{\alpha,\rho}(f_{\omega,\beta,\gamma})<r^{*}_{\rho}(f_{\omega,\beta,\gamma})$ for all $\alpha \in (0,1),\rho \in [0,1)$.
 \end{theorem}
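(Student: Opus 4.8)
\emph{Proof strategy.} The plan is to reduce the whole statement to an analysis of the real-variable function $r\mapsto J(\alpha,f_{\omega,\beta,\gamma}(r))$. First I would record, from the Weierstrassian (Hadamard) decomposition of $\Psi_{\omega,\beta,\gamma}$ --- legitimate because $(\tfrac{1}{\omega},\beta)\in W_{i}$ forces the zeros $\varsigma_{\omega,\beta,\gamma,n}$ of $\Psi_{\omega,\beta,\gamma}$ and, by interlacing, the zeros $\varsigma^{\prime}_{\omega,\beta,\gamma,n}$ of $\Psi^{\prime}_{\omega,\beta,\gamma}$ to be real with $\varsigma^{\prime}_{\omega,\beta,\gamma,n}<\varsigma_{\omega,\beta,\gamma,n}$ --- the expansions $\tfrac{z\Psi^{\prime}_{\omega,\beta,\gamma}(z)}{\Psi_{\omega,\beta,\gamma}(z)}=\beta-\sum_{n\geq 1}\tfrac{2z^{2}}{\varsigma_{\omega,\beta,\gamma,n}^{2}-z^{2}}$ and $1+\tfrac{z\Psi^{\prime\prime}_{\omega,\beta,\gamma}(z)}{\Psi^{\prime}_{\omega,\beta,\gamma}(z)}=\beta-\sum_{n\geq 1}\tfrac{2z^{2}}{\varsigma^{\prime 2}_{\omega,\beta,\gamma,n}-z^{2}}$. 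Since $f_{\omega,\beta,\gamma}=\Psi_{\omega,\beta,\gamma}^{1/\beta}$, logarithmic differentiation gives $\tfrac{zf^{\prime}_{\omega,\beta,\gamma}}{f_{\omega,\beta,\gamma}}=\tfrac{1}{\beta}\tfrac{z\Psi^{\prime}_{\omega,\beta,\gamma}}{\Psi_{\omega,\beta,\gamma}}$ and $1+\tfrac{zf^{\prime\prime}_{\omega,\beta,\gamma}}{f^{\prime}_{\omega,\beta,\gamma}}=1+(\tfrac{1}{\beta}-1)\tfrac{z\Psi^{\prime}_{\omega,\beta,\gamma}}{\Psi_{\omega,\beta,\gamma}}+\tfrac{z\Psi^{\prime\prime}_{\omega,\beta,\gamma}}{\Psi^{\prime}_{\omega,\beta,\gamma}}$, and substituting yields $J(\alpha,f_{\omega,\beta,\gamma}(z))=\alpha+\alpha\tfrac{z\Psi^{\prime\prime}_{\omega,\beta,\gamma}(z)}{\Psi^{\prime}_{\omega,\beta,\gamma}(z)}+(\tfrac{1}{\beta}-\alpha)\tfrac{z\Psi^{\prime}_{\omega,\beta,\gamma}(z)}{\Psi_{\omega,\beta,\gamma}(z)}=1-\alpha\sum_{n\geq 1}\tfrac{2z^{2}}{\varsigma^{\prime 2}_{\omega,\beta,\gamma,n}-z^{2}}-(\tfrac{1}{\beta}-\alpha)\sum_{n\geq 1}\tfrac{2z^{2}}{\varsigma_{\omega,\beta,\gamma,n}^{2}-z^{2}}$, which for real $z$ is exactly the left-hand side of the claimed functional equation minus $\rho$.

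Next I would transfer this estimate to the disk: for $|z|\leq r<\varsigma^{\prime}_{\omega,\beta,\gamma,1}$ I claim $\Re J(\alpha,f_{\omega,\beta,\gamma}(z))\geq J(\alpha,f_{\omega,\beta,\gamma}(r))$. If $0\leq\alpha\leq 1/\beta$ the coefficients $\alpha$ and $\tfrac{1}{\beta}-\alpha$ are both nonnegative and (\ref{uc3}), applied termwise with $z^{2}$ and $\varsigma^{2}$ in the roles of $z$ and $b$, gives the bound at once. If $\alpha>1/\beta$ I would rewrite the $n$th pair of terms as $-\alpha\big(\tfrac{2z^{2}}{\varsigma^{\prime 2}_{\omega,\beta,\gamma,n}-z^{2}}-\lambda\,\tfrac{2z^{2}}{\varsigma_{\omega,\beta,\gamma,n}^{2}-z^{2}}\big)$ with $\lambda=1-\tfrac{1}{\alpha\beta}\in(0,1)$ and invoke (\ref{uc2}) of Lemma \ref{ucv1} with $a=\varsigma_{\omega,\beta,\gamma,n}^{2}>b=\varsigma^{\prime 2}_{\omega,\beta,\gamma,n}>r^{2}\geq|z|^{2}$, the chain $a>b$ being exactly the interlacing above. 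Since $z\mapsto J(\alpha,f_{\omega,\beta,\gamma}(z))$ is analytic on $\mathbb{D}(\varsigma^{\prime}_{\omega,\beta,\gamma,1})$ (the $1/\beta$-power causes no branch point there because $f_{\omega,\beta,\gamma}$ vanishes only at the origin, simply, inside that disk, and $f^{\prime}_{\omega,\beta,\gamma}$ is zero-free there), its real part is harmonic and the minimum principle for harmonic functions upgrades the bound to $\inf_{|z|<r}\Re J(\alpha,f_{\omega,\beta,\gamma}(z))=J(\alpha,f_{\omega,\beta,\gamma}(r))$, with equality only at $z=r$.

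I would then examine $u_{\omega,\beta,\gamma}(r)=J(\alpha,f_{\omega,\beta,\gamma}(r))-\rho$ on $(0,\varsigma^{\prime}_{\omega,\beta,\gamma,1})$. It is strictly decreasing: for $\alpha\leq 1/\beta$ this is immediate since each $r\mapsto\tfrac{2r^{2}}{c^{2}-r^{2}}$ increases, while for $\alpha>1/\beta$ differentiation gives $u^{\prime}_{\omega,\beta,\gamma}(r)=-4r\big(\alpha\sum_{n\geq 1}\tfrac{\varsigma^{\prime 2}_{\omega,\beta,\gamma,n}}{(\varsigma^{\prime 2}_{\omega,\beta,\gamma,n}-r^{2})^{2}}-(\alpha-\tfrac{1}{\beta})\sum_{n\geq 1}\tfrac{\varsigma_{\omega,\beta,\gamma,n}^{2}}{(\varsigma_{\omega,\beta,\gamma,n}^{2}-r^{2})^{2}}\big)$, whose bracket is positive because $\alpha>\alpha-\tfrac{1}{\beta}>0$ and $t\mapsto\tfrac{t}{(t-r^{2})^{2}}$ decreases for $t>r^{2}$, so each $\varsigma^{\prime}$-term dominates the corresponding $\varsigma$-term. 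Since $u_{\omega,\beta,\gamma}(0^{+})=1-\rho>0$ and (for $\alpha>0$) $u_{\omega,\beta,\gamma}(r)\to-\infty$ as $r\uparrow\varsigma^{\prime}_{\omega,\beta,\gamma,1}$, there is a unique zero $r_{\alpha,\rho}(f_{\omega,\beta,\gamma})\in(0,\varsigma^{\prime}_{\omega,\beta,\gamma,1})\subset(0,\varsigma_{\omega,\beta,\gamma,1})$, which by the previous paragraph is precisely the radius of $\alpha$-convexity of order $\rho$ and solves the stated equation; for $\alpha=0$ the statement collapses to the radius of starlikeness of order $\rho$ established in \cite{prajapati}.

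For the dependence on $\alpha$, set $G(\alpha,r)=J(\alpha,f_{\omega,\beta,\gamma}(r))=(1-\alpha)\tfrac{rf^{\prime}_{\omega,\beta,\gamma}(r)}{f_{\omega,\beta,\gamma}(r)}+\alpha\big(1+\tfrac{rf^{\prime\prime}_{\omega,\beta,\gamma}(r)}{f^{\prime}_{\omega,\beta,\gamma}(r)}\big)$. Then $\partial_{\alpha}G=\big(1+\tfrac{rf^{\prime\prime}_{\omega,\beta,\gamma}}{f^{\prime}_{\omega,\beta,\gamma}}\big)-\tfrac{rf^{\prime}_{\omega,\beta,\gamma}}{f_{\omega,\beta,\gamma}}=\sum_{n\geq 1}\big(\tfrac{2r^{2}}{\varsigma_{\omega,\beta,\gamma,n}^{2}-r^{2}}-\tfrac{2r^{2}}{\varsigma^{\prime 2}_{\omega,\beta,\gamma,n}-r^{2}}\big)<0$ on $(0,\varsigma^{\prime}_{\omega,\beta,\gamma,1})$ by interlacing, while $\partial_{r}G<0$ there by the previous step; the implicit function theorem applied to $G(\alpha,r_{\alpha,\rho})=\rho$ gives $\tfrac{d}{d\alpha}r_{\alpha,\rho}(f_{\omega,\beta,\gamma})=-\partial_{\alpha}G/\partial_{r}G<0$, so $\alpha\mapsto r_{\alpha,\rho}(f_{\omega,\beta,\gamma})$ is strictly decreasing on $[0,\infty)$. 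Using $r_{0,\rho}(f_{\omega,\beta,\gamma})=r^{*}_{\rho}(f_{\omega,\beta,\gamma})$ and $r_{1,\rho}(f_{\omega,\beta,\gamma})=r^{c}_{\rho}(f_{\omega,\beta,\gamma})$ this yields $r^{c}_{\rho}(f_{\omega,\beta,\gamma})<r_{\alpha,\rho}(f_{\omega,\beta,\gamma})<r^{*}_{\rho}(f_{\omega,\beta,\gamma})$ for all $\alpha\in(0,1)$. The delicate points I anticipate are the case $\alpha>1/\beta$ --- fixing $\lambda\in[0,1]$ and checking $a>b>r\geq|z|$ so that Lemma \ref{ucv1} genuinely applies, plus the matching monotonicity of $u_{\omega,\beta,\gamma}$ --- and the bookkeeping needed to pass from $\partial_{\alpha}G<0$ to the strict endpoint inequalities with $r^{*}_{\rho}$ and $r^{c}_{\rho}$; the rest is routine.
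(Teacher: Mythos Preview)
Your proposal is correct and follows essentially the same approach as the paper: both derive the series representation of $J(\alpha,f_{\omega,\beta,\gamma}(z))$ from the Hadamard factorizations of $\Psi_{\omega,\beta,\gamma}$ and $\Psi^{\prime}_{\omega,\beta,\gamma}$, use Lemma~\ref{ucv1} together with the interlacing $\varsigma^{\prime}_{\omega,\beta,\gamma,n}<\varsigma_{\omega,\beta,\gamma,n}$ to obtain $\Re J(\alpha,f_{\omega,\beta,\gamma}(z))\geq J(\alpha,f_{\omega,\beta,\gamma}(r))$, and then analyze the real function $r\mapsto J(\alpha,f_{\omega,\beta,\gamma}(r))$ and its $\alpha$-derivative. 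The only cosmetic differences are that you split the real-part estimate into the cases $\alpha\leq 1/\beta$ and $\alpha>1/\beta$ (the paper instead invokes the inequality~(\ref{uc2}) in the extended form $\lambda\leq 1$ to cover both at once) and that you make the monotonicity of $\alpha\mapsto r_{\alpha,\rho}$ explicit via the implicit function theorem rather than asserting it directly from $\partial_{\alpha}J<0$.
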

 \begin{proof}
 Without loss of generality we assume that $\alpha>0,$ the case $\alpha=0$ was proved in \cite{prajapati}. By using the definition of the function $f_{\omega,\beta,\gamma}(z)$ we have
 $$\frac{zf^{\prime}_{\omega,\beta,\gamma}(z)}{f_{\omega,\beta,\gamma}(z)}=\frac{1}{\beta}\frac{z\Psi^{\prime}_{\omega,\beta,\gamma}(z)}{\Psi_{\omega,\beta,\gamma}(z)},1+\frac{zf^{\prime\prime}_{\omega,\beta,\gamma}(z)}{f^{\prime}_{\omega,\beta,\gamma}(z)}=1+\frac{z\Psi^{\prime\prime}_{\omega,\beta,\gamma}(z)}{\Psi^{\prime}_{\omega,\beta,\gamma}(z)}+\left(\frac{1}{\beta}-1\right)\frac{z\Psi^{\prime}_{\omega,\beta,\gamma}(z)}{\Psi_{\omega,\beta,\gamma}(z)}.$$
 Now consider the following infinite product representations in  \cite{prajapati},
  \begin{gather*}
 \Gamma(\beta)\Psi_{\omega,\beta,\gamma}(z)=z^{\beta}\prod_{n \geq 1}\left(1-\frac{z^{2}}{\varsigma^{2}_{\omega,\beta,\gamma,n}}\right),\quad \Gamma(\beta)\Psi^{\prime}_{\omega,\beta,\gamma}(z)=\beta z^{\beta-1}\prod_{n \geq 1}\left(1-\frac{z^{2}}{\varsigma^{\prime2}_{\omega,\beta,\gamma,n}}\right),
 \end{gather*}
where $\varsigma_{\omega,\beta,\gamma,1}$ and $\varsigma^{\prime}_{\omega,\beta,\gamma,1}$ denote the first positive zeros of $\Psi_{\omega,\beta,\gamma}$ and $\Psi^{\prime}_{\omega,\beta,\gamma}$ respectively. By logarithmic differentiation we have
 \begin{gather*}
 \frac{z\Psi^{\prime}_{\omega,\beta,\gamma}(z)}{\Psi_{\omega,\beta,\gamma}(z)}=\beta-\sum_{n \geq 1}\frac{2z^{2}}{\varsigma^{2}_{\omega,\beta,\gamma,n}-z^{2}},\quad \frac{z\Psi^{\prime\prime}_{\omega,\beta,\gamma}(z)}{\Psi^{\prime}_{\omega,\beta,\gamma}(z)}=\beta-1-\sum_{n \geq 1}\frac{2z^{2}}{\varsigma^{\prime 2}_{\omega,\beta,\gamma,n}-z^{2}},
 \end{gather*}
which imply that
\begin{eqnarray*}
J(\alpha,f_{\omega,\beta,\gamma}(z))&=&(1-\alpha)\frac{zf^{\prime}_{\omega,\beta,\gamma}(z)}{f_{\omega,\beta,\gamma}(z)}+\alpha \left(1+\frac{zf^{\prime\prime}_{\omega,\beta,\gamma}(z)}{f^{\prime}_{\omega,\beta,\gamma}(z)}\right)\\ \nonumber &=& 1-\left(\frac{1}{\beta}-\alpha \right)\sum_{n \geq 1}\frac{2z^{2}}{\varsigma^{2}_{\omega,\beta,\gamma,n}-z^{2}}-\alpha \sum_{n \geq 1}\frac{2z^{2}}{\varsigma^{\prime 2}_{\omega,\beta,\gamma,n}-z^{2}}.
\end{eqnarray*}
We know that if $a>b>0, z \in \mathbb{C}$ and $\lambda \leq 1 ,$ then for all $|z|<b$ we have, 
\begin{equation}\label{ucv21}
\lambda \Re \left(\frac{z}{a-z}\right)-\Re \left(\frac{z}{b-z}\right)\geq \frac{|z|}{a-|z|}-\frac{|z|}{b-|z|}.
\end{equation}
From Lemma \ref{ucv1}, it is clear that it is assumed that $\lambda \in [0,1],$ we do not need the assumption $\lambda \geq 0$ so using (\ref{ucv21}) for all $z \in \mathbb{D}(0,\varsigma^{\prime}_{\omega,\beta,\gamma,1}).$ We obtain the inequality
\begin{eqnarray*}
\frac{1}{\alpha}\Re(J(\alpha,f_{\omega,\beta,\gamma}(z)))&\geq & \frac{1}{\alpha}+\left(1-\frac{1}{\alpha\beta}\right)\sum_{n \geq 1}\frac{2r^{2}}{\varsigma^{2}_{\omega,\beta,\gamma,n}-r^{2}}- \sum_{n \geq 1}\frac{2r^{2}}{\varsigma^{\prime 2}_{\omega,\beta,\gamma,n}-r^{2}}\\\nonumber &=& \frac{1}{\alpha}(J(\alpha,f_{\omega,\beta,\gamma}(r))),
\end{eqnarray*}
where $|z|=r.$  The zeros  $\varsigma_{\omega,\beta,\gamma}$ and $\varsigma^{\prime}_{\omega,\beta,\gamma}$ are interlacing. From \cite[Lemma-1]{prajapati}, we have  
\begin{equation}\label{ucv32}
\varsigma_{\omega,\beta,\gamma}<\varsigma^{\prime}_{\omega,\beta,\gamma}.
\end{equation}
The above inequality implies that for $r \in (0,\varsigma^{\prime}_{\omega,\beta,\gamma,1}),$ we have $\inf_{z \in \mathbb{D}(r)}J(\alpha,f_{\omega
,\beta,\gamma}(z))=J(\alpha,f_{\omega,\beta,\gamma}(r))$ and the function $r \rightarrow J(\alpha,f_{\omega,\beta,\gamma}(r))$ is strictly decreasing on $(0,\varsigma^{\prime}_{\omega,\beta,\gamma,1}).$ Since
\begin{eqnarray*}
\frac{\partial }{\partial  r}J(\alpha,f_{\omega,\beta,\gamma}(r))&=&-\left(\frac{1}{\beta}-\alpha \right)\sum_{n\geq 1}\frac{4r\varsigma^{2}_{\omega,\beta,\gamma,n}}{(\varsigma^{2}_{\omega,\beta,\gamma,n}-r^{2})^{2}}-\alpha\sum_{n\geq 1}\frac{4r\varsigma^{\prime 2}_{\omega,\beta,\gamma,n}}{(\varsigma^{\prime 2}_{\omega,\beta,\gamma,n}-r^{2})^{2}}\\ &<&\alpha\sum_{n\geq 1}\frac{4r\varsigma^{2}_{\omega,\beta,\gamma,n}}{(\varsigma^{2}_{\omega,\beta,\gamma,n}-r^{2})^{2}}-\alpha\sum_{n\geq 1}\frac{4r\varsigma^{\prime 2}_{\omega,\beta,\gamma,n}}{(\varsigma^{\prime 2}_{\omega,\beta,\gamma,n}-r^{2})^{2}}<0,
\end{eqnarray*}
for $\gamma>0$ and $r \in (0,\varsigma^{\prime}_{\omega,\beta,\gamma,1}).$ Again we  used that the zeros $\varsigma_{\omega,\beta,\gamma,n}$ and $\varsigma^{\prime}_{\omega,\beta,\gamma,n}$ are interlaced and for  all $n \in \mathbb{N},\gamma>0,$ and $r<\sqrt{\varsigma_{\omega,\beta,\gamma,1}. ~\varsigma^{\prime}_{\omega,\beta,\gamma,1}}$ we have that,
$$\varsigma^{2}_{\omega,\beta,\gamma,n}(\varsigma^{\prime 2}_{\omega,\beta,\gamma,n}-r^{2})^{2}<\varsigma^{\prime 2}_{\omega,\beta,\gamma,n}(\varsigma^{2}_{\omega,\beta,\gamma,n}-r^{2})^{2}.$$
We also have that $ \lim_{r\searrow 0}J(\alpha,f_{\omega,\beta,\gamma}(r))=1>\rho $ and $\lim_{r\nearrow \varsigma^{\prime}_{\omega,\beta,\gamma,1}}J(\alpha,f_{\omega,\beta,\gamma}(r))=-\infty,$
which means that for $z \in \mathbb{D}(r_{1}).$ We have $\Re J(\alpha,f_{\omega,\beta
,\gamma}(r))>\rho$ if and only if $r_{1}$ is the unique root of $J(\alpha,f_{\omega,\beta
,\gamma}(r))=\rho,$ situated in $(0,\varsigma^{\prime}_{\omega,\beta,\gamma,1}).$ Finally using again the interlacing inequalities (\ref{ucv32}), we obtain the inequality
$$\frac{\partial}{\partial \alpha} J(\alpha,f_{\omega,\beta,\gamma}(r))=\sum_{n \geq 1}\frac{2r^{2}}{\varsigma^{2}_{\omega,\beta,\gamma,n}-r^{2}}-\sum_{n \geq 1}\frac{2r^{2}}{\varsigma^{\prime 2}_{\omega,\beta,\gamma,n}-r^{2}}<0,$$
where $\gamma>0, \alpha \geq 0$ and $r \in (0, \varsigma^{\prime}_{\omega,\beta,\gamma,1}).$ This implies that the function $\alpha \rightarrow J(\alpha,f_{\omega,\beta,\gamma}(r))$ is strictly decreasing on $[0,\infty)$ for all $\gamma>0, \alpha \geq 0$ and $r \in (0, \varsigma^{\prime}_{\omega,\beta,\gamma,1})$ fixed. Consequently, as a function of $\alpha$ the unique root of the equation $J(\alpha,f_{\omega,\beta,\gamma}(r))=\rho$ is strictly decreasing where $\rho \in [0,1), \gamma>0$ and $r \in (0, \varsigma^{\prime}_{\omega,\beta,\gamma,1}).$ Thus in the case when $\alpha \in (0,1)$ the radius of $\alpha-$convexity of the function $f_{\omega,\beta,\gamma}$ will be between the radius of convexity and the radius of starlikeness of the function $f_{\omega,\beta,\gamma}.$ 
 \end{proof}
\begin{theorem}\label{theorem2}
If $(\frac{1}{\omega},\beta) \in W_{i},\gamma>0,  \alpha \geq 0,\rho
\in [0,1),$
 then the radius of $\alpha-$convexity of order $\rho$ of the function $g_{\omega,\beta,\gamma}$  is the smallest positive root of the equation
 $$(1-\alpha)\frac{rg^{\prime}_{\omega,\beta,\gamma}(r)}{g_{\omega,\beta,\gamma}(r)}+\alpha\left(1+\frac{r g^{\prime\prime}_{\omega,\beta,\gamma}(r)}{g^{\prime}_{\omega,\beta,\gamma}(r)}\right)=\rho.$$  The radius of $\alpha-$convexity satisfies $r_{\alpha,\rho}(g_{\omega,\beta,\gamma})<\upsilon^{\prime}_{\omega,\beta,\gamma,1}<\upsilon_{\omega,\beta,\gamma,1},$ where $\upsilon_{\omega,\beta,\gamma,1}$ and $\upsilon^{\prime}_{\omega,\beta,\gamma,1}$ denote the first positive zeros of $g_{\omega,\beta,\gamma}$ and $g^{\prime}_{\omega,\beta,\gamma}$ respectively. Moreover the function $\alpha \rightarrow r_{\alpha,\rho}(g_{\omega,\beta,\gamma})$ is strictly decreasing on $[0,\infty)$ and  consequently, we have $r^{c}_{\rho}<r_{\alpha,\rho}(g_{\omega,\beta,\gamma})<r^{*}_{\rho}(g_{\omega,\beta,\gamma})$ for all $\alpha \in (0,1),\rho \in [0,1)$.

\end{theorem}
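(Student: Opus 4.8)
The plan is to imitate the proof of Theorem~\ref{theorem1}, replacing $f_{\omega,\beta,\gamma}$ and its associated product $\Psi_{\omega,\beta,\gamma}$ by $g_{\omega,\beta,\gamma}$ and by the Hadamard factorizations of $g_{\omega,\beta,\gamma}$ and $g'_{\omega,\beta,\gamma}$ recorded in \cite[Theorem-3(b)]{prajapati}. Write $\upsilon_{\omega,\beta,\gamma,n}$ and $\upsilon'_{\omega,\beta,\gamma,n}$ for the $n$th positive zeros of $g_{\omega,\beta,\gamma}$ and $g'_{\omega,\beta,\gamma}$, so that $g_{\omega,\beta,\gamma}(z)=z\prod_{n\ge1}(1-z^2/\upsilon^2_{\omega,\beta,\gamma,n})$ and $g'_{\omega,\beta,\gamma}(z)=\prod_{n\ge1}(1-z^2/\upsilon'^2_{\omega,\beta,\gamma,n})$. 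Logarithmic differentiation (cf. \cite[Theorem-3(b)]{prajapati}) gives
$$\frac{zg'_{\omega,\beta,\gamma}(z)}{g_{\omega,\beta,\gamma}(z)}=1-\sum_{n\ge1}\frac{2z^2}{\upsilon^2_{\omega,\beta,\gamma,n}-z^2},\qquad 1+\frac{zg''_{\omega,\beta,\gamma}(z)}{g'_{\omega,\beta,\gamma}(z)}=1-\sum_{n\ge1}\frac{2z^2}{\upsilon'^2_{\omega,\beta,\gamma,n}-z^2},$$
and hence, with the notation introduced just before Theorem~\ref{theorem1},
$$J(\alpha,g_{\omega,\beta,\gamma}(z))=1-(1-\alpha)\sum_{n\ge1}\frac{2z^2}{\upsilon^2_{\omega,\beta,\gamma,n}-z^2}-\alpha\sum_{n\ge1}\frac{2z^2}{\upsilon'^2_{\omega,\beta,\gamma,n}-z^2}.$$
As in Theorem~\ref{theorem1} I would assume $\alpha>0$, the case $\alpha=0$ being the radius of starlikeness of order $\rho$ already obtained in \cite{prajapati}.

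The second step is to show that $\Re J(\alpha,g_{\omega,\beta,\gamma}(z))\ge J(\alpha,g_{\omega,\beta,\gamma}(r))$ for $|z|=r<\upsilon'_{\omega,\beta,\gamma,1}$, with equality only at $z=r$. Setting $u=z^2$ and applying inequality (\ref{ucv21}) — the extension of Lemma~\ref{ucv1} valid for all $\lambda\le1$, with no restriction $\lambda\ge0$ — termwise with $a=\upsilon^2_{\omega,\beta,\gamma,n}$, $b=\upsilon'^2_{\omega,\beta,\gamma,n}$ and $\lambda=1-1/\alpha\le1$, together with the interlacing $\upsilon'_{\omega,\beta,\gamma,1}<\upsilon_{\omega,\beta,\gamma,1}$ (Rolle's theorem on $[0,\upsilon_{\omega,\beta,\gamma,1}]$, or \cite[Lemma-1]{prajapati}), yields $\tfrac1\alpha\Re J(\alpha,g_{\omega,\beta,\gamma}(z))\ge\tfrac1\alpha J(\alpha,g_{\omega,\beta,\gamma}(r))$. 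By the minimum principle for harmonic functions equality forces $z=r$, so $\inf_{|z|<r}\Re J(\alpha,g_{\omega,\beta,\gamma}(z))=J(\alpha,g_{\omega,\beta,\gamma}(r))$ for each $r\in(0,\upsilon'_{\omega,\beta,\gamma,1})$.

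Next I would study the real-variable function $r\mapsto J(\alpha,g_{\omega,\beta,\gamma}(r))$ on $(0,\upsilon'_{\omega,\beta,\gamma,1})$. Termwise differentiation gives
$$\frac{\partial}{\partial r}J(\alpha,g_{\omega,\beta,\gamma}(r))=-(1-\alpha)\sum_{n\ge1}\frac{4r\upsilon^2_{\omega,\beta,\gamma,n}}{(\upsilon^2_{\omega,\beta,\gamma,n}-r^2)^2}-\alpha\sum_{n\ge1}\frac{4r\upsilon'^2_{\omega,\beta,\gamma,n}}{(\upsilon'^2_{\omega,\beta,\gamma,n}-r^2)^2},$$
which is negative: for $\alpha\le1$ both sums are nonpositive and the second is strictly so, while for $\alpha>1$ one bounds $\alpha-1<\alpha$ and uses that $x\mapsto x/(x-r^2)^2$ is decreasing on $(r^2,\infty)$ together with $\upsilon^2_{\omega,\beta,\gamma,n}(\upsilon'^2_{\omega,\beta,\gamma,n}-r^2)^2<\upsilon'^2_{\omega,\beta,\gamma,n}(\upsilon^2_{\omega,\beta,\gamma,n}-r^2)^2$. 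Since $\lim_{r\searrow0}J(\alpha,g_{\omega,\beta,\gamma}(r))=1>\rho$ and $\lim_{r\nearrow\upsilon'_{\omega,\beta,\gamma,1}}J(\alpha,g_{\omega,\beta,\gamma}(r))=-\infty$ (the $n=1$ term of the second sum tends to $+\infty$ and carries the positive coefficient $\alpha$), the equation $J(\alpha,g_{\omega,\beta,\gamma}(r))=\rho$ has a unique root $r_2\in(0,\upsilon'_{\omega,\beta,\gamma,1})$, and by the second step $\Re J(\alpha,g_{\omega,\beta,\gamma}(z))>\rho$ throughout $\mathbb{D}(r_2)$, so $r_2=r_{\alpha,\rho}(g_{\omega,\beta,\gamma})$. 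Finally, $\tfrac{\partial}{\partial\alpha}J(\alpha,g_{\omega,\beta,\gamma}(r))=\sum_{n\ge1}\tfrac{2r^2}{\upsilon^2_{\omega,\beta,\gamma,n}-r^2}-\sum_{n\ge1}\tfrac{2r^2}{\upsilon'^2_{\omega,\beta,\gamma,n}-r^2}<0$ by interlacing, so $\alpha\mapsto J(\alpha,g_{\omega,\beta,\gamma}(r))$ is strictly decreasing for each fixed $r$; hence the root $\alpha\mapsto r_{\alpha,\rho}(g_{\omega,\beta,\gamma})$ is strictly decreasing on $[0,\infty)$, and the values $\alpha=0$ and $\alpha=1$ give $r^{c}_{\rho}(g_{\omega,\beta,\gamma})<r_{\alpha,\rho}(g_{\omega,\beta,\gamma})<r^{*}_{\rho}(g_{\omega,\beta,\gamma})$ for $\alpha\in(0,1)$.

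The only genuinely delicate point, exactly as in Theorem~\ref{theorem1}, is the regime $\alpha>1$: there the coefficient $1-\alpha$ of the $\upsilon$-sum changes sign, so the naive triangle-inequality estimates of the $\eta$-uniform convexity theorem are unavailable. The remedy is twofold: use Lemma~\ref{ucv1} in the signed form (\ref{ucv21}), which requires only $\lambda\le1$, and control the sign of $\partial_r J(\alpha,g_{\omega,\beta,\gamma}(r))$ via the interlacing of the zeros of $g_{\omega,\beta,\gamma}$ and $g'_{\omega,\beta,\gamma}$ together with the monotonicity of $x\mapsto x/(x-r^2)^2$. Everything else is the same bookkeeping as in the preceding proof.
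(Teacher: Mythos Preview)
Your proposal is correct and follows essentially the same approach as the paper: the same Hadamard factorizations from \cite[Theorem-3(b)]{prajapati}, the same expression for $J(\alpha,g_{\omega,\beta,\gamma}(z))$, the same application of the signed inequality (\ref{ucv21}) with $\lambda=1-1/\alpha$, the same monotonicity and limit analysis for $r\mapsto J(\alpha,g_{\omega,\beta,\gamma}(r))$, and the same $\partial_\alpha$-argument for the monotonicity in $\alpha$. If anything, your write-up is slightly more careful than the paper's in separating the cases $\alpha\le1$ and $\alpha>1$ when checking $\partial_r J<0$, and in citing Rolle/\cite[Lemma-1]{prajapati} for the interlacing $\upsilon'_{\omega,\beta,\gamma,n}<\upsilon_{\omega,\beta,\gamma,n}$ rather than pointing (as the paper does) to the analogue (\ref{ucv32}) for the $\varsigma$-zeros.
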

\begin{proof}
Similarly, as the proof of Theorem \ref{theorem1}, we assume that $\alpha>0,$ the case $\alpha=0$ was proved in \cite{prajapati}. By using the definition of the function $g_{\omega,\beta,\gamma}(z)$ and the infinite product representation, we have 
 $$1+\frac{zg^{\prime\prime}_{\omega,\beta,\gamma}(z)}{g^{\prime}_{\omega,\beta,\gamma}(z)}=1-\sum_{n \geq 1}\frac{2z^{2}}{\upsilon^{2}_{\omega,\beta,\gamma,n}-z^{2}},$$
where $\upsilon_{\omega,\beta,\gamma,n}$ is the nth positive zero of the $g_{\omega,\beta,\gamma}.$ Thus, we have
\begin{eqnarray*}
J(\alpha,g_{\omega,\beta,\gamma}(z))&=&(1-\alpha)\frac{zg^{\prime}_{\omega,\beta,\gamma}(z)}{g_{\omega,\beta,\gamma}(z)}+\alpha \left(1+\frac{zg^{\prime\prime}_{\omega,\beta,\gamma}(z)}{g^{\prime}_{\omega,\beta,\gamma}(z)}\right)\\ \nonumber &=& 1 -\left(1-\alpha \right)\sum_{n \geq 1}\frac{2z^{2}}{\upsilon^{2}_{\omega,\beta,\gamma,n}-z^{2}}-\alpha \sum_{n \geq 1}\frac{2z^{2}}{\upsilon^{\prime 2}_{\omega,\beta,\gamma,n}-z^{2}}.
\end{eqnarray*}
Applying the inequality (\ref{ucv21}), we have
\begin{eqnarray*}
\frac{1}{\alpha}\Re(J(\alpha,g_{\omega,\beta,\gamma}(z)))&\geq & \frac{1}{\alpha}+\left(1-\frac{1}{\alpha}\right)\sum_{n \geq 1}\frac{2r^{2}}{\upsilon^{2}_{\omega,\beta,\gamma,n}-r^{2}}- \sum_{n \geq 1}\frac{2r^{2}}{\upsilon^{\prime 2}_{\omega,\beta,\gamma,n}-r^{2}}\\\nonumber &=& \frac{1}{\alpha}(J(\alpha,g_{\omega,\beta,\gamma}(r))),
\end{eqnarray*}
where $|z|=r.$
The above inequality implies that for $r \in (0,\upsilon^{\prime}_{\omega,\beta,\gamma,1}),$ we have $\inf_{z \in \mathbb{D}(r)}J(\alpha,g_{\omega
,\beta,\gamma}(z))=J(\alpha,g_{\omega,\beta,\gamma}(r))$ and the function $r \rightarrow J(\alpha,g_{\omega,\beta,\gamma}(r))$ is strictly decreasing on $(0,\upsilon^{\prime}_{\omega,\beta,\gamma,1}).$ Since
\begin{eqnarray*}
\frac{\partial }{\partial  r}J(\alpha,g_{\omega,\beta,\gamma}(r))&=&-\left(1-\alpha \right)\sum_{n\geq 1}\frac{4r\upsilon^{2}_{\omega,\beta,\gamma,n}}{(\upsilon^{2}_{\omega,\beta,\gamma,n}-r^{2})^{2}}-\alpha\sum_{n\geq 1}\frac{4r\upsilon^{\prime 2}_{\omega,\beta,\gamma,n}}{(\upsilon^{\prime 2}_{\omega,\beta,\gamma,n}-r^{2})^{2}}<0,
\end{eqnarray*}
for $\gamma>0$ and $r \in (0,\upsilon^{\prime}_{\omega,\beta,\gamma,1}).$ Again we  used that the zeros $\upsilon_{\omega,\beta,\gamma,n}$ and $\upsilon^{\prime}_{\omega,\beta,\gamma,n}$ are interlaced and for  all $n \in \mathbb{N},\gamma>0,$ and $r<\sqrt{\upsilon_{\omega,\beta,\gamma,1}.~\upsilon^{\prime}_{\omega,\beta,\gamma,1}}$ we have that,
$$\upsilon^{2}_{\omega,\beta,\gamma,n}(\upsilon^{\prime 2}_{\omega,\beta,\gamma,n}-r^{2})^{2}<\upsilon^{\prime 2}_{\omega,\beta,\gamma,n}(\upsilon^{2}_{\omega,\beta,\gamma,n}-r^{2})^{2}.$$
We also have that $ \lim_{r\searrow 0}J(\alpha,g_{\omega,\beta,\gamma}(r))=1>\rho $ and $\lim_{r\nearrow \upsilon^{\prime}_{\omega,\beta,\gamma,1}}J(\alpha,g_{\omega,\beta,\gamma}(r))=-\infty,$
which means that for $z \in \mathbb{D}(r_{2}).$ We have $\Re J(\alpha,g_{\omega,\beta
,\gamma}(r))>\rho$ if and only if $r_{1}$ is the unique root of $J(\alpha,g_{\omega,\beta
,\gamma}(r))=\rho,$ situated in $(0,\upsilon^{\prime}_{\omega,\beta,\gamma,1}).$ Finally, using again the interlacing inequalities (\ref{ucv32}), we obtain the inequality
$$\frac{\partial}{\partial \alpha} J(\alpha,g_{\omega,\beta,\gamma}(r))=\sum_{n \geq 1}\frac{2r^{2}}{\upsilon^{2}_{\omega,\beta,\gamma,n}-r^{2}}-\sum_{n \geq 1}\frac{2r^{2}}{\upsilon^{\prime 2}_{\omega,\beta,\gamma,n}-r^{2}}<0,$$
where $\gamma>0, \alpha \geq 0$ and $r \in (0, \upsilon^{\prime}_{\omega,\beta,\gamma,1}).$ This implies that the function $\alpha \rightarrow J(\alpha,g_{\omega,\beta,\gamma}(r))$ is strictly decreasing on $[0,\infty)$ for all $\gamma>0, \alpha \geq 0$ and $r \in (0, \upsilon^{\prime}_{\omega,\beta,\gamma,1})$ fixed. Consequently, as a function of $\alpha$ the unique root of the equation $J(\alpha,g_{\omega,\beta,\gamma}(r))=\rho$ is strictly decreasing where $\rho \in [0,1),\gamma>0$ and $r \in (0, \upsilon^{\prime}_{\omega,\beta,\gamma,1}).$ Thus in the case when $\alpha \in (0,1)$ the radius of $\alpha-$convexity of the function $g_{\omega,\beta,\gamma}$ will be between the radius of convexity and the radius of starlikeness of the function $g_{\omega,\beta,\gamma}.$ 
\end{proof}

\begin{theorem}\label{theorem3}
If $(\frac{1}{\omega},\beta) \in ,\gamma>0,  \alpha \geq 0,\rho
\in [0,1),$
 then the radius of $\alpha-$convexity of order $\rho$ of the function $h_{\omega,\beta,\gamma}$  is the smallest positive root of the equation
 $$(1-\alpha)\frac{rh^{\prime}_{\omega,\beta,\gamma}(r)}{h_{\omega,\beta,\gamma}(r)}+\alpha\left(1+\frac{r h^{\prime\prime}_{\omega,\beta,\gamma}(r)}{h^{\prime}_{\omega,\beta,\gamma}(r)}\right)=\rho.$$  The radius of $\alpha-$convexity satisfies $r_{\alpha,\rho}(h_{\omega,\beta,\gamma})<\upsilon^{\prime}_{\omega,\beta,\gamma,1}<\upsilon_{\omega,\beta,\gamma,1},$ where $\upsilon_{\omega,\beta,\gamma,1}$ and $\upsilon^{\prime}_{\omega,\beta,\gamma,1}$ denote the first positive zeros of $h_{\omega,\beta,\gamma}$ and $h^{\prime}_{\omega,\beta,\gamma}$ respectively. Moreover the function $\alpha \rightarrow r_{\alpha,\rho}(h_{\omega,\beta,\gamma})$ is strictly decreasing on $[0,\infty)$ and  consequently, we have $r^{c}_{\rho}<r_{\alpha,\rho}(h_{\omega,\beta,\gamma})<r^{*}_{\rho}(h_{\omega,\beta,\gamma})$ for all $\alpha \in (0,1),\rho \in [0,1)$.
\end{theorem}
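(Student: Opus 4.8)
The plan is to transcribe, almost verbatim, the scheme used for Theorems \ref{theorem1} and \ref{theorem2}; the only structural change is that $h_{\omega,\beta,\gamma}$ is built from $\phi(\omega,\beta,\gamma,-z)$ rather than $\phi(\omega,\beta,\gamma,-z^{2})$, so the Weierstrassian factorisation produces simple poles at the zeros $\upsilon_{\omega,\beta,\gamma,n}$ of $h_{\omega,\beta,\gamma}$ and $\upsilon'_{\omega,\beta,\gamma,n}$ of $h'_{\omega,\beta,\gamma}$ instead of quadratic ones. As before one may assume $\alpha>0$, the case $\alpha=0$ being the radius of starlikeness of order $\rho$ treated in \cite{prajapati}. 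Since $(\tfrac1\omega,\beta)\in W_i$ and $\gamma>0$, the zeros of $\phi(\omega,\beta,\gamma,\cdot)$ are real and negative, so $h_{\omega,\beta,\gamma}$ and $h'_{\omega,\beta,\gamma}$ admit the infinite product representations from \cite{prajapati} with real, positive, interlacing zeros $\upsilon'_{\omega,\beta,\gamma,n}<\upsilon_{\omega,\beta,\gamma,n}$; logarithmic differentiation then gives
$$\frac{zh'_{\omega,\beta,\gamma}(z)}{h_{\omega,\beta,\gamma}(z)}=1-\sum_{n\geq1}\frac{z}{\upsilon_{\omega,\beta,\gamma,n}-z},\qquad \frac{zh''_{\omega,\beta,\gamma}(z)}{h'_{\omega,\beta,\gamma}(z)}=-\sum_{n\geq1}\frac{z}{\upsilon'_{\omega,\beta,\gamma,n}-z},$$
the second identity being (\ref{uc9}). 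Substituting these into the operator $J$ yields
$$J(\alpha,h_{\omega,\beta,\gamma}(z))=1-(1-\alpha)\sum_{n\geq1}\frac{z}{\upsilon_{\omega,\beta,\gamma,n}-z}-\alpha\sum_{n\geq1}\frac{z}{\upsilon'_{\omega,\beta,\gamma,n}-z}.$$

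The next step is to show that $\inf_{z\in\mathbb{D}(r)}\Re J(\alpha,h_{\omega,\beta,\gamma}(z))=J(\alpha,h_{\omega,\beta,\gamma}(r))$ for every $r\in(0,\upsilon'_{\omega,\beta,\gamma,1})$. When $\alpha\in(0,1]$ both coefficients $1-\alpha$ and $\alpha$ are nonnegative and the bound follows at once from (\ref{uc3}) applied termwise. When $\alpha>1$ one rewrites the general summand as $\alpha\bigl(\tfrac{\alpha-1}{\alpha}\,\tfrac{z}{\upsilon_{\omega,\beta,\gamma,n}-z}-\tfrac{z}{\upsilon'_{\omega,\beta,\gamma,n}-z}\bigr)$ and invokes (\ref{ucv21}) (as remarked after Lemma \ref{ucv1}, the hypothesis $\lambda\geq0$ is not needed) with $a=\upsilon_{\omega,\beta,\gamma,n}>b=\upsilon'_{\omega,\beta,\gamma,n}$ and $\lambda=\tfrac{\alpha-1}{\alpha}\leq1$. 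In both cases the minimum principle for harmonic functions shows the infimum is attained precisely at $z=r$, so that $\Re J(\alpha,h_{\omega,\beta,\gamma}(z))>\rho$ on $\mathbb{D}(r)$ if and only if $J(\alpha,h_{\omega,\beta,\gamma}(r))>\rho$.

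Then I would prove that $r\mapsto J(\alpha,h_{\omega,\beta,\gamma}(r))$ is strictly decreasing on $(0,\upsilon'_{\omega,\beta,\gamma,1})$. Differentiating termwise,
$$\frac{\partial}{\partial r}J(\alpha,h_{\omega,\beta,\gamma}(r))=-(1-\alpha)\sum_{n\geq1}\frac{\upsilon_{\omega,\beta,\gamma,n}}{(\upsilon_{\omega,\beta,\gamma,n}-r)^{2}}-\alpha\sum_{n\geq1}\frac{\upsilon'_{\omega,\beta,\gamma,n}}{(\upsilon'_{\omega,\beta,\gamma,n}-r)^{2}},$$
which is negative: trivially for $\alpha\leq1$, and for $\alpha>1$ because, for $n\in\mathbb{N}$ and $r<\upsilon'_{\omega,\beta,\gamma,1}\leq\sqrt{\upsilon_{\omega,\beta,\gamma,n}\upsilon'_{\omega,\beta,\gamma,n}}$, the identity $\upsilon_{\omega,\beta,\gamma,n}(\upsilon'_{\omega,\beta,\gamma,n}-r)^{2}-\upsilon'_{\omega,\beta,\gamma,n}(\upsilon_{\omega,\beta,\gamma,n}-r)^{2}=(\upsilon_{\omega,\beta,\gamma,n}-\upsilon'_{\omega,\beta,\gamma,n})(r^{2}-\upsilon_{\omega,\beta,\gamma,n}\upsilon'_{\omega,\beta,\gamma,n})<0$ together with $\alpha-1<\alpha$ forces $(\alpha-1)\upsilon_{\omega,\beta,\gamma,n}(\upsilon'_{\omega,\beta,\gamma,n}-r)^{2}<\alpha\upsilon'_{\omega,\beta,\gamma,n}(\upsilon_{\omega,\beta,\gamma,n}-r)^{2}$. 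Since $\lim_{r\searrow0}J(\alpha,h_{\omega,\beta,\gamma}(r))=1>\rho$ and $\lim_{r\nearrow\upsilon'_{\omega,\beta,\gamma,1}}J(\alpha,h_{\omega,\beta,\gamma}(r))=-\infty$ (the last pole contributes $-\infty$ as $\alpha>0$), the equation $J(\alpha,h_{\omega,\beta,\gamma}(r))=\rho$ has a unique root $r_{3}\in(0,\upsilon'_{\omega,\beta,\gamma,1})$, and by the preceding paragraph $r_{3}=r_{\alpha,\rho}(h_{\omega,\beta,\gamma})<\upsilon'_{\omega,\beta,\gamma,1}<\upsilon_{\omega,\beta,\gamma,1}$.

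Finally, for the dependence on $\alpha$ I would observe
$$\frac{\partial}{\partial\alpha}J(\alpha,h_{\omega,\beta,\gamma}(r))=\sum_{n\geq1}\frac{r}{\upsilon_{\omega,\beta,\gamma,n}-r}-\sum_{n\geq1}\frac{r}{\upsilon'_{\omega,\beta,\gamma,n}-r}<0$$
by the interlacing $\upsilon'_{\omega,\beta,\gamma,n}<\upsilon_{\omega,\beta,\gamma,n}$, so $\alpha\mapsto J(\alpha,h_{\omega,\beta,\gamma}(r))$ is strictly decreasing for each fixed $r\in(0,\upsilon'_{\omega,\beta,\gamma,1})$; hence its root $r_{\alpha,\rho}(h_{\omega,\beta,\gamma})$ is strictly decreasing in $\alpha$ on $[0,\infty)$, and evaluating at the endpoints $\alpha=1$ (radius of convexity of order $\rho$) and $\alpha=0$ (radius of starlikeness of order $\rho$) gives $r^{c}_{\rho}(h_{\omega,\beta,\gamma})<r_{\alpha,\rho}(h_{\omega,\beta,\gamma})<r^{*}_{\rho}(h_{\omega,\beta,\gamma})$ for $\alpha\in(0,1)$. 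The only genuinely delicate point is the range $\alpha>1$, where $1-\alpha<0$: there both the infimum estimate and the sign of $\partial_r J$ require the interlacing of the zeros of $h_{\omega,\beta,\gamma}$ and $h'_{\omega,\beta,\gamma}$ and the elementary inequality (\ref{ucv21}); everything else is a direct transcription of the $f$- and $g$-arguments.
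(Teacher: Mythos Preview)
Your proposal is correct and follows essentially the same route as the paper: derive the partial-fraction expression for $J(\alpha,h_{\omega,\beta,\gamma}(z))$ from the Hadamard products, apply inequality (\ref{ucv21}) (with $\lambda=1-\tfrac{1}{\alpha}\le 1$) to obtain the infimum at $z=r$, then check strict monotonicity in $r$ and in $\alpha$ via the interlacing of zeros. The only cosmetic difference is that you split the infimum estimate and the sign of $\partial_r J$ into the cases $\alpha\le 1$ and $\alpha>1$, whereas the paper handles all $\alpha>0$ at once by dividing through by $\alpha$; your explicit algebraic identity $\upsilon_n(\upsilon'_n-r)^2-\upsilon'_n(\upsilon_n-r)^2=(\upsilon_n-\upsilon'_n)(r^2-\upsilon_n\upsilon'_n)$ is in fact a cleaner justification than the paper gives.
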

\begin{proof}
From the infinite product representation \cite{prajapati}
$$1+\frac{zh^{\prime\prime}_{\omega,\beta,\gamma}(z)}{h^{\prime}_{\omega,\beta,\gamma}(z)}=1-\sum_{n \geq 1}\frac{z}{\upsilon_{\omega,\beta,\gamma,n}-z},$$
where $\upsilon_{\omega,\beta,\gamma,n}$ is the $n$th positive zero of the $h_{\omega,\beta,\gamma}.$ Thus, we have
\begin{eqnarray*}
J(\alpha,h_{\omega,\beta,\gamma}(z))&=&(1-\alpha)\frac{zh^{\prime}_{\omega,\beta,\gamma}(z)}{h_{\omega,\beta,\gamma}(z)}+\alpha \left(1+\frac{zh^{\prime\prime}_{\omega,\beta,\gamma}(z)}{h^{\prime}_{\omega,\beta,\gamma}(z)}\right)\\ \nonumber &=& 1 +\left(\alpha-1 \right)\sum_{n \geq 1}\frac{z}{\upsilon_{\omega,\beta,\gamma,n}-z}-\alpha \sum_{n \geq 1}\frac{z}{\upsilon^{\prime }_{\omega,\beta,\gamma,n}-z}.
\end{eqnarray*}
Applying the inequality (\ref{ucv21}), we have
\begin{eqnarray*}
\frac{1}{\alpha}\Re(J(\alpha,h_{\omega,\beta,\gamma}(z)))&\geq & \frac{1}{\alpha}+\left(1-\frac{1}{\alpha}\right)\sum_{n \geq 1}\frac{r}{\upsilon_{\omega,\beta,\gamma,n}-r}- \sum_{n \geq 1}\frac{r}{\upsilon^{\prime }_{\omega,\beta,\gamma,n}-r}\\\nonumber &=& \frac{1}{\alpha}(J(\alpha,h_{\omega,\beta,\gamma}(r))),
\end{eqnarray*}
where $|z|=r.$
The above inequality implies that for $r \in (0,\upsilon^{\prime}_{\omega,\beta,\gamma,1}),$ we have $$\inf_{z \in \mathbb{D}(r)}J(\alpha,h_{\omega
,\beta,\gamma}(z))=J(\alpha,h_{\omega,\beta,\gamma}(r))$$ and the function $r \rightarrow J(\alpha,h_{\omega,\beta,\gamma}(r))$ is strictly decreasing on $(0,\upsilon^{\prime}_{\omega,\beta,\gamma,1}).$ Since
\begin{eqnarray*}
\frac{\partial }{\partial  r}J(\alpha,h_{\omega,\beta,\gamma}(r))&=&\left(\alpha-1 \right)\sum_{n\geq 1}\frac{\upsilon_{\omega,\beta,\gamma,n}}{(\upsilon_{\omega,\beta,\gamma,n}-r)^{2}}-\alpha\sum_{n\geq 1}\frac{\upsilon^{\prime}_{\omega,\beta,\gamma,n}}{(\upsilon^{\prime}_{\omega,\beta,\gamma,n}-r)^{2}}<0,
\end{eqnarray*}
for $\gamma>0$ and $r \in (0,\upsilon^{\prime}_{\omega,\beta,\gamma,1}).$ Again we  used that the zeros $\upsilon_{\omega,\beta,\gamma,n}$ and $\upsilon^{\prime}_{\omega,\beta,\gamma,n}$ are interlaced and for  all $n \in \mathbb{N},\gamma>0,$ and $r<(\upsilon_{\omega,\beta,\gamma,1}.~\upsilon^{\prime}_{\omega,\beta,\gamma,1})$ we have that
$$\upsilon_{\omega,\beta,\gamma,n}(\upsilon^{\prime}_{\omega,\beta,\gamma,n}-r)^{2}<\upsilon^{\prime}_{\omega,\beta,\gamma,n}(\upsilon_{\omega,\beta,\gamma,n}-r)^{2}.$$
We also have that $ \lim_{r\searrow 0}J(\alpha,h_{\omega,\beta,\gamma}(r))=1>\rho $ and $\lim_{r\nearrow \upsilon^{\prime}_{\omega,\beta,\gamma,1}}J(\alpha,h_{\omega,\beta,\gamma}(r))=-\infty,$
which means that for $z \in \mathbb{D}(r_{3}).$ We have $\Re J(\alpha,h_{\omega,\beta
,\gamma}(r))>\rho$ if and only if $r_{3}$ is the unique root of $J(\alpha,h_{\omega,\beta
,\gamma}(r))=\rho,$ situated in $(0,\upsilon^{\prime}_{\omega,\beta,\gamma,1}).$ Finally using again the interlacing inequalities (\ref{ucv32}), we obtain the inequality
$$\frac{\partial}{\partial \alpha} J(\alpha,h_{\omega,\beta,\gamma}(r))=\sum_{n \geq 1}\frac{r}{\upsilon_{\omega,\beta,\gamma,n}-r}-\sum_{n \geq 1}\frac{r}{\upsilon^{\prime}_{\omega,\beta,\gamma,n}-r}<0,$$
where $\gamma>0, \alpha \geq 0$ and $r \in (0, \upsilon^{\prime}_{\omega,\beta,\gamma,1}).$ This implies that the function $\alpha \rightarrow J(\alpha,h_{\omega,\beta,\gamma}(r))$ is strictly decreasing on $[0,\infty)$ for all $\gamma>0, \alpha \geq 0$ and $r \in (0, \upsilon^{\prime}_{\omega,\beta,\gamma,1})$ fixed. Consequently, as a function of $\alpha$ the unique root of the equation $J(\alpha,h_{\omega,\beta,\gamma}(r))=\rho$ is strictly decreasing where $\rho \in [0,1),\gamma>0$ and $r \in (0, \upsilon^{\prime}_{\omega,\beta,\gamma,1}).$ Thus in the case when $\alpha \in (0,1)$ the radius of $\alpha-$convexity of the function $h_{\omega,\beta,\gamma}$ will be between the radius of convexity and the radius of starlikeness of the function $h_{\omega,\beta,\gamma}.$ 
\end{proof}
\subsection{Radii of $\eta-$parabolic starlikeness of order $\rho$ for the functions $f_{\omega,\beta,\gamma},g_{\omega,\beta,\gamma},$ and $h_{\omega,\beta,\gamma}$}

Now, our aim is to investigate the radii of $\eta-$parabolic starlikeness of order $\rho$ of the normalized forms of the generalized three parameter Mittag-Leffler function, that is of $f_{\omega,\beta,\gamma},$ $g_{\omega,\beta,\gamma}$ and $h_{\omega,\beta,\gamma}$  which are actually solutions of some transcendental equations. For simplicity we use the notation $\lambda(\omega,\beta,\gamma,z)=\phi(\omega,\beta,\gamma,-z^2)$ for this theorem. The technique of determining the radii of $\eta-$parabolic starlikeness of order $\rho$ in the next theorem follows the ideas comes from \cite{bohra}. The results of the next theorem are natural extensions of some recent results see \cite{prajapat}, where the special case of $\gamma=1,\eta=0$ was considered.

\begin{theorem}\label{theorem111}
Let $\left(\frac{1}{\omega},\beta\right)\in W_i,$ $\gamma>0$ and $\rho \in [0,1),\eta \geq 0.$
\begin{enumerate}
\item[\bf a.] The radius of $\eta-$parabolic starlikeness of order $\rho$ of $f_{\omega,\beta,\gamma}$ is $r^{*}_{\rho}(f_{\omega,\beta,\gamma})=x_{\omega,\beta,\gamma,1},$ where $x_{\omega,\beta,\gamma,1}$ is the smallest positive zero of the transcendental equation
$$(1+\eta)r\lambda'(\omega,\beta,\gamma,r)-\beta (\rho-1)\lambda(\omega,\beta,\gamma,r)=0.$$
\item[\bf b.] The radius of $\eta-$parabolic starlikeness of order $\rho$ of $g_{\omega,\beta,\gamma}$ is $r^{*}_{\rho}(g_{\omega,\beta,\gamma})=y_{\omega,\beta,\gamma,1},$ where $y_{\omega,\beta,\gamma,1}$ is the smallest positive zero of the transcendental equation
$$(1+\eta)r\lambda'(\omega,\beta,\gamma,r)- (\rho-1)\lambda(\omega,\beta,\gamma,r)=0.$$
\item[\bf c.] The radius of $\eta-$parabolic starlikeness of order $\rho$ of $h_{\omega,\beta,\gamma}$ is $r^{*}_{\rho}(h_{\omega,\beta,\gamma})=z_{\omega,\beta,\gamma,1},$ where $z_{\omega,\beta,\gamma,1}$ is the smallest positive zero of the transcendental equation
$$(1+\eta)\sqrt{r}\lambda'(\omega,\beta,\gamma,\sqrt{r})-2(\rho-1)\lambda(\omega,\beta,\gamma,\sqrt{r})=0.$$
\end{enumerate}
\end{theorem}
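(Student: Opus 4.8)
The plan is to use the conic characterization recorded in Subsection~1.2: a function $u\in\mathcal A$ belongs to $\eta\text{-}SP(\rho)$ on $\mathbb D(r)$ exactly when $zu'(z)/u(z)\in\mathcal R_{\eta,\rho}$ for every $z\in\mathbb D(r)$, and by~(\ref{k444}) a complex number $w$ lies in $\mathcal R_{\eta,\rho}$ iff $\Re w-\eta|w-1|-\rho>0$. So for each of $f_{\omega,\beta,\gamma},g_{\omega,\beta,\gamma},h_{\omega,\beta,\gamma}$ I would compute $\inf\{\Re(zu'(z)/u(z))-\eta|zu'(z)/u(z)-1|-\rho:z\in\mathbb D(r)\}$ and take the radius of $\eta$-parabolic starlikeness of order $\rho$ to be the largest $r$ for which this quantity is nonnegative.

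Take $u=f_{\omega,\beta,\gamma}$ first. By the Weierstrassian decomposition of $\Psi_{\omega,\beta,\gamma}(z)=z^{\beta}\lambda(\omega,\beta,\gamma,z)$ given in~\cite{prajapati} --- legitimate because, by the Kumar--Pathan result cited in Subsection~1.5, $(\tfrac1\omega,\beta)\in W_i$ and $\gamma>0$ force all the zeros $\pm\varsigma_{\omega,\beta,\gamma,n}$ of $\Psi_{\omega,\beta,\gamma}$ to be real --- one has, exactly as in the proof of Theorem~\ref{theorem1},
$$p(z):=\frac{zf'_{\omega,\beta,\gamma}(z)}{f_{\omega,\beta,\gamma}(z)}=1-\frac1\beta\sum_{n\ge1}\frac{2z^{2}}{\varsigma_{\omega,\beta,\gamma,n}^{2}-z^{2}}.$$
Applying inequality~(\ref{uc3}) of Lemma~\ref{ucv1} with $z^{2}$ in place of $z$ and $\varsigma_{\omega,\beta,\gamma,n}^{2}$ in place of $b$, for $|z|\le r<\varsigma_{\omega,\beta,\gamma,1}$ I get both $\Re p(z)\ge p(r)$ and $|p(z)-1|\le1-p(r)$, whence $\Re p(z)-\eta|p(z)-1|-\rho\ge(1+\eta)p(r)-\eta-\rho$. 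Since $\Re p$ is harmonic and $z\mapsto-|p(z)-1|$ is superharmonic, the minimum principle identifies the right-hand side as the exact infimum of $\Re p-\eta|p-1|-\rho$ over $\mathbb D(r)$. As a function of $r$, namely $1-\rho-(1+\eta)\tfrac1\beta\sum_{n\ge1}\tfrac{2r^{2}}{\varsigma_{\omega,\beta,\gamma,n}^{2}-r^{2}}$, it is strictly decreasing on $(0,\varsigma_{\omega,\beta,\gamma,1})$ (each term of the series increases in $r$), tends to $1-\rho>0$ as $r\searrow0$ and to $-\infty$ as $r\nearrow\varsigma_{\omega,\beta,\gamma,1}$, hence has a unique zero, which is $r^{*}_{\rho}(f_{\omega,\beta,\gamma})$. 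Finally I would rewrite $(1+\eta)p(r)-\eta-\rho=0$: from $\Psi_{\omega,\beta,\gamma}(z)=z^{\beta}\lambda(\omega,\beta,\gamma,z)$ one gets $\tfrac{z\Psi'_{\omega,\beta,\gamma}(z)}{\Psi_{\omega,\beta,\gamma}(z)}=\beta+\tfrac{z\lambda'(\omega,\beta,\gamma,z)}{\lambda(\omega,\beta,\gamma,z)}$, hence $p(r)=1+\tfrac1\beta\tfrac{r\lambda'(\omega,\beta,\gamma,r)}{\lambda(\omega,\beta,\gamma,r)}$, and multiplying the equation by $\beta\lambda(\omega,\beta,\gamma,r)$ (positive on $(0,\varsigma_{\omega,\beta,\gamma,1})$) yields $(1+\eta)r\lambda'(\omega,\beta,\gamma,r)-\beta(\rho-1)\lambda(\omega,\beta,\gamma,r)=0$; on $(0,\varsigma_{\omega,\beta,\gamma,1})$ this is equivalent to the infimum equation, so its smallest positive root equals $r^{*}_{\rho}(f_{\omega,\beta,\gamma})$, giving~\textbf{a}.

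For $u=g_{\omega,\beta,\gamma}$, i.e. $g_{\omega,\beta,\gamma}(z)=z\Gamma(\beta)\lambda(\omega,\beta,\gamma,z)$, the argument is identical, using $\tfrac{zg'_{\omega,\beta,\gamma}(z)}{g_{\omega,\beta,\gamma}(z)}=1-\sum_{n\ge1}\tfrac{2z^{2}}{\upsilon_{\omega,\beta,\gamma,n}^{2}-z^{2}}=1+\tfrac{z\lambda'(\omega,\beta,\gamma,z)}{\lambda(\omega,\beta,\gamma,z)}$ with $\upsilon_{\omega,\beta,\gamma,n}$ the positive (real) zeros of $g_{\omega,\beta,\gamma}$; multiplying the infimum equation by $\lambda(\omega,\beta,\gamma,r)$ produces~\textbf{b}. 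For $h_{\omega,\beta,\gamma}(z)=z\Gamma(\beta)\phi(\omega,\beta,\gamma,-z)=z\Gamma(\beta)\lambda(\omega,\beta,\gamma,\sqrt z)$, the Weierstrassian form is $z\prod_{n\ge1}(1-z/\upsilon_{\omega,\beta,\gamma,n})$, so $\tfrac{zh'_{\omega,\beta,\gamma}(z)}{h_{\omega,\beta,\gamma}(z)}=1-\sum_{n\ge1}\tfrac{z}{\upsilon_{\omega,\beta,\gamma,n}-z}$; here~(\ref{uc3}) is applied to $z$ itself, the infimum over $\mathbb D(r)$ equals $1-\rho-(1+\eta)\sum_{n\ge1}\tfrac{r}{\upsilon_{\omega,\beta,\gamma,n}-r}=(1+\eta)\tfrac{rh'_{\omega,\beta,\gamma}(r)}{h_{\omega,\beta,\gamma}(r)}-\eta-\rho$, and since the chain rule gives $\tfrac{rh'_{\omega,\beta,\gamma}(r)}{h_{\omega,\beta,\gamma}(r)}=1+\tfrac{\sqrt r}{2}\,\tfrac{\lambda'(\omega,\beta,\gamma,\sqrt r)}{\lambda(\omega,\beta,\gamma,\sqrt r)}$, multiplying the infimum equation by $2\lambda(\omega,\beta,\gamma,\sqrt r)$ gives the transcendental equation of~\textbf{c}.

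The point that needs care is sharpness: I must make sure that $(1+\eta)\tfrac{ru'(r)}{u(r)}-\eta-\rho$ is the \emph{exact} infimum over $\mathbb D(r)$ and not merely a lower bound, so that the smallest positive root of the transcendental equation really is the extremal radius rather than an underestimate; this is exactly what forces the joint use of the two sharp estimates of Lemma~\ref{ucv1} together with the superharmonicity/minimum-principle argument for $z\mapsto-|p(z)-1|$, and it also uses (as in Theorem~\ref{theorem1}) the strict monotonicity of the auxiliary function, which makes the root unique and automatically below the first positive zero of $u$. The only other thing to watch is the substitution $z\mapsto\sqrt z$ in the $h_{\omega,\beta,\gamma}$ case, which is precisely what turns $r$ into $\sqrt r$ and introduces the factor $2$ in~\textbf{c}, together with the (easy) check that $\lambda(\omega,\beta,\gamma,\cdot)$ stays positive on the interval in question so that clearing denominators does not change the zero set.
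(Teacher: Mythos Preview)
Your proposal is correct and follows essentially the same route as the paper: Weierstrassian factorization from~\cite{prajapati}, the series expansions~(\ref{para1})--(\ref{para3}), the elementary estimate~(\ref{uc3}) to bound both $\Re p(z)$ and $|p(z)-1|$, attainment of equality at $z=r$ giving the exact infimum, and then strict monotonicity plus the limits $1-\rho>0$ and $-\infty$ to locate a unique root, which is finally rewritten in terms of $\lambda(\omega,\beta,\gamma,\cdot)$. Your additional remark on superharmonicity of $-|p(z)-1|$ and your explicit chain-rule derivation of the $\sqrt{r}$ and factor $2$ in part~\textbf{c} are slight elaborations beyond the paper's presentation, but the underlying argument is identical.
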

\begin{proof}
Recall that  the zeros of the Mittag-Leffler function $\phi(\omega,\beta,\gamma,z)$ are all real and infinite product exists. Now from the infinite product representation was proved in \cite{prajapati} which of the form
$$\phi(\omega,\beta,\gamma,-z^{2})=\frac{1}{\Gamma(\beta)} \prod_{n\geq 1}\left(1-\frac{z^{2}}{\lambda^{2}_{\omega,\beta,\gamma,n}}\right)$$
and this infinite product is uniformly convergent on each compact subset of $\mathbb{C}.$ Denoting, the above expression by $\lambda(\omega,\beta,\gamma,z),$ and by logarithmic differentiation we get
\begin{equation*}
\frac{\lambda'(\omega,\beta,\gamma,z)}{\lambda(\omega,\beta,\gamma,z)}=\sum _{n \geq 1}\frac{-2z}{\lambda^{2}_{\omega,\beta,\gamma,n}-z^{2}},
\end{equation*}
which in turn implies that
\begin{equation}\label{para1}
\frac{zf^{\prime}_{\omega,\beta,\gamma}(z)}{f_{\omega,\beta,\gamma}(z)}=1-\frac{1}{\beta}\sum _{n \geq 1}\frac{2z^{2}}{\lambda^{2}_{\omega,\beta,\gamma,n}-z^{2}}. 
\end{equation}
\begin{equation}\label{para2}
\frac{zg^{\prime}_{\omega,\beta,\gamma}(z)}{g_{\omega,\beta,\gamma}(z)}=1-\sum _{n \geq 1}\frac{2z^{2}}{\lambda^{2}_{\omega,\beta,\gamma,n}-z^{2}}. 
\end{equation}
\begin{equation}\label{para3}
\frac{zh^{\prime}_{\omega,\beta,\gamma}(z)}{h_{\omega,\beta,\gamma}(z)}=1-\sum _{n \geq 1}\frac{z}{\lambda^{2}_{\omega,\beta,\gamma,n}-z}.
\end{equation}
We know that \cite{baricz2} if $z \in \mathbb{C}$ and $\theta \in \mathbb{R}$ are such that $\theta > |z|,$ then
\begin{equation}\label{cov}
\frac{|z|}{\theta-|z|} \geq \Re \left(\frac{z}{\theta-z}\right).
\end{equation}
Thus the inequality
\begin{equation*}
\frac{|z|^{2}}{\lambda^{2}_{\omega,\beta,\gamma,n}-|z|^{2}} \geq \Re \left(\frac{z^{2}}{\lambda^{2}_{\omega,\beta,\gamma,n}-z^{2}}\right),
\end{equation*}
is valid for every $\left(\frac{1}{\omega},\beta\right)\in W_i,$ $\gamma>0,$ $n \in \mathbb{N}$ and $|z|< \lambda_{\omega,\beta,\gamma,1},$ and therefore
under the same conditions we have that
$$
\Re \left(\frac{zf^{\prime}_{\omega,\beta,\gamma}(z)}{f_{\omega,\beta,\gamma}(z)}\right)=1-\frac{1}{\beta}\Re \left(\sum _{n \geq 1}\frac{2z^{2}}{\lambda^{2}_{\omega,\beta,\gamma,n}-z^{2}}\right)\geq 1-\frac{1}{\beta}\sum _{n \geq 1}\frac{2|z|^{2}}{\lambda^{2}_{\omega,\beta,\gamma,n}-|z|^{2}}=\frac{|z|f^{\prime}_{\omega,\beta,\gamma}(|z|)}{f_{\omega,\beta,\gamma}(|z|)},
$$
$$
\Re \left(\frac{zg^{\prime}_{\omega,\beta,\gamma}(z)}{g_{\omega,\beta,\gamma}(z)}\right)=1-\Re \left(\sum _{n \geq 1}\frac{2z^{2}}{\lambda^{2}_{\omega,\beta,\gamma,n}-z^{2}}\right)\geq 1-\sum _{n \geq 1}\frac{2|z|^{2}}{\lambda^{2}_{\omega,\beta,\gamma,n}-|z|^{2}}=\frac{|z|g^{\prime}_{\omega,\beta,\gamma}(|z|)}{g_{\omega,\beta,\gamma}(|z|)},$$
and
$$
\Re \left(\frac{zh^{\prime}_{\omega,\beta,\gamma}(z)}{h_{\omega,\beta,\gamma}(z)}\right)=1-\Re \left(\sum _{n \geq 1}\frac{z}{\lambda^{2}_{\omega,\beta,\gamma,n}-z}\right)\geq 1-\sum _{n \geq 1}\frac{|z|}{\lambda^{2}_{\omega,\beta,\gamma,n}-|z|}=\frac{|z|h^{\prime}_{\omega,\beta,\gamma}(|z|)}{h_{\omega,\beta,\gamma}(|z|)}.
$$
Now using triangle inequality $\left||z_{1}|-|z_{2}|\right| \leq \left|z_{1}-z_{2}\right|$ we get 
$$\left|\frac{zf^{\prime}_{\omega,\beta,\gamma}(z)}{f_{\omega,\beta,\gamma}(z)}-1\right|=\frac{1}{\beta}\left|\sum_{n \geq 1} \frac{2z^{2}}{\lambda^{2}_{\omega,\beta,\gamma,n}-z^{2}}\right| \leq \frac{1}{\beta}\sum_{n \geq 1}\frac{2|z|^{2}}{\lambda^{2}_{\omega,\beta,\gamma,n}-|z|^{2}}=1-\frac{|z|f^{\prime}_{\omega,\beta,\gamma}(|z|)}{f_{\omega,\beta,\gamma}(|z|)},$$
 $$\left|\frac{zg^{\prime}_{\omega,\beta,\gamma}(z)}{g_{\omega,\beta,\gamma}(z)}-1\right|=\left|\sum_{n \geq 1} \frac{2z^{2}}{\lambda^{2}_{\omega,\beta,\gamma,n}-z^{2}}\right| \leq \sum_{n \geq 1}\frac{2|z|^{2}}{\lambda^{2}_{\omega,\beta,\gamma,n}-|z|^{2}}=1-\frac{|z|g^{\prime}_{\omega,\beta,\gamma}(|z|)}{g_{\omega,\beta,\gamma}(|z|)},$$
 and
 $$\left|\frac{zh^{\prime}_{\omega,\beta,\gamma}(z)}{h_{\omega,\beta,\gamma}(z)}-1\right|=\left|\sum_{n \geq 1} \frac{z}{\lambda^{2}_{\omega,\beta,\gamma,n}-z}\right| \leq \sum_{n \geq 1}\frac{|z|}{\lambda^{2}_{\omega,\beta,\gamma,n}-|z|}=1-\frac{|z|f^{\prime}_{\omega,\beta,\gamma}(|z|)}{f_{\omega,\beta,\gamma}(|z|)}.$$
 Hence we have
 $$\Re \left(\frac{zf^{\prime}_{\omega,\beta,\gamma}(z)}{f_{\omega,\beta,\gamma}(z)}\right)-\eta \left|\frac{zf^{\prime}_{\omega,\beta,\gamma}(z)}{f_{\omega,\beta,\gamma}(z)}-1\right|-\rho \geq(1+\eta) \frac{|z|f^{\prime}_{\omega,\beta,\gamma}(|z|)}{f_{\omega,\beta,\gamma}(|z|)}-(\eta+\rho), $$
 $$\Re \left(\frac{zg^{\prime}_{\omega,\beta,\gamma}(z)}{g_{\omega,\beta,\gamma}(z)}\right)-\eta \left|\frac{zg^{\prime}_{\omega,\beta,\gamma}(z)}{g_{\omega,\beta,\gamma}(z)}-1\right|-\rho \geq(1+\eta) \frac{|z|g^{\prime}_{\omega,\beta,\gamma}(|z|)}{g_{\omega,\beta,\gamma}(|z|)}-(\eta+\rho),$$
 and
 $$\Re \left(\frac{zh^{\prime}_{\omega,\beta,\gamma}(z)}{h_{\omega,\beta,\gamma}(z)}\right)-\eta \left|\frac{zh^{\prime}_{\omega,\beta,\gamma}(z)}{h_{\omega,\beta,\gamma}(z)}-1\right|-\rho \geq(1+\eta) \frac{|z|h^{\prime}_{\omega,\beta,\gamma}(|z|)}{h_{\omega,\beta,\gamma}(|z|)}-(\eta+\rho),$$
where equalities are attained only when $z=|z|=r.$ The minimum principle for harmonic functions and the previous inequalities imply that the corresponding inequalities in the above are valid if and only if we have $|z|<x_{\alpha,\beta,\gamma,1},$ $|z|< y_{\alpha,\beta,\gamma,1}$ and $|z|< z_{\alpha,\beta,\gamma,1},$ respectively, where $x_{\alpha,\beta,\gamma,1},$ $y_{\alpha,\beta.\gamma,1}$ and $z_{\alpha,\beta,\gamma,1}$ are the smallest positive roots of the following equations
$$
(1+\eta) \frac{rf^{\prime}_{\omega,\beta,\gamma}(r)}{f_{\omega,\beta,\gamma}(r)}-(\eta+\rho)=0,$$
$$(1+\eta) \frac{rg^{\prime}_{\omega,\beta,\gamma}(r)}{g_{\omega,\beta,\gamma}(r)}-(\eta+\rho)=0,$$
$$(1+\eta) \frac{rh^{\prime}_{\omega,\beta,\gamma}(r)}{h_{\omega,\beta,\gamma}(r)}-(\eta+\rho)=0,$$
which are equivalent to $$(1+\eta)r\lambda'(\omega,\beta,\gamma,r)-\beta(\rho-1)\lambda(\omega,\beta,\gamma,r)=0,$$ 
$$(1+\eta) r\lambda'(\omega,\beta,\gamma,r)- (\rho-1)\lambda(\omega,\beta,\gamma,r)=0,$$
$$(1+\eta)\sqrt{r}\lambda'(\omega,\beta,\gamma,\sqrt{r})-2(\rho-1)\lambda(\omega,\beta,\gamma,\sqrt{r})=0.$$
We note that $$\lim_{r \searrow 0}  (1+\eta)\left(1-\frac{1}{\beta}\sum_{n \geq 1}\frac{2r^{2}}{\lambda^{2}_{\omega,\beta,\gamma,n}-r^{2}}\right)-(\eta+\rho)=1-\rho>0$$
and 
$$\lim_{r \nearrow \lambda_{\omega,\beta,\gamma,1}}  (1+\eta)\left(1-\frac{1}{\beta}\sum_{n \geq 1}\frac{2r^{2}}{\lambda^{2}_{\omega,\beta,\gamma,n}-r^{2}}\right)-(\eta+\rho)=-\infty.$$
Hence we have $(1+\eta)r\lambda'(\omega,\beta,\gamma,r)-\beta(\rho-1)\lambda(\omega,\beta,\gamma,r)=0$ has a root in $(0,\lambda_{\omega,\beta,\gamma,1}).$ Similarly for other two equations.
\end{proof}
\begin{remark}
For $\eta=0,$ Theorem \ref{theorem111} reduces to \cite[Theorem-1]{prajapati}.
\end{remark}
\subsection{Radii of strong starlikeness of order $\rho$ for the functions $f_{\omega,\beta,\gamma},g_{\omega,\beta,\gamma},$ and $h_{\omega,\beta,\gamma}$}
Our aim is to investigate the radii of strong  starlikeness of order $\rho$ of the normalized forms of the generalized three parameter Mittag-Leffler function, that is of $f_{\omega,\beta,\gamma},$ $g_{\omega,\beta,\gamma}$ and $h_{\omega,\beta,\gamma}$  which are the solutions of some  equations.  The technique of determining the radii of strong starlikeness of order $\rho$ in the next theorem follows the ideas comes from \cite{bohra}. 
\begin{definition}\cite{brannan} A function  $f(z)\in \mathcal{S}$ is said to  the class of univalent  \textit{strongly starlike of order $\rho$} in $\mathbb{D}$ if it satisfies the inequalities:$$\left|\arg \left(\frac{zf^{\prime}(z)}{f(z)}\right)\right|<\frac{\pi \rho}{2}~~~~~(0<\rho \leq 1, z \in \mathbb{D}).$$
\end{definition}
The radius of strong starlikeness of order $\rho$ of the function $f$ is defined in \cite{gang}.
$$r^{S^{*}}_{\rho}(f)=\left\{r \in (0,r_{f}):\left|\arg \left(\frac{zf^{\prime}(z)}{f(z)}\right)\right|<\frac{\pi \rho}{2},0<\rho \leq 1, z \in \mathbb{D}(r)\right\}.$$
\begin{lemma}\label{strong}\cite{gang}
If $c$ is any point in $|\arg w| \leq \frac{\pi}{2}\rho$ and if $R_{c}\leq \Re[c]\sin\frac{\pi}{2}\rho-\Im[c]\cos\frac{\pi}{2}\rho,~~ \Im[c]\geq 0.$ The disk $|w-c|\leq R_{c}$ is contained in the sector $|\arg w|\leq \frac{\pi}{2}\rho,~~ 0<\rho \leq 1.$ In particular when $\Im[c]=0,$ the condition becomes $R_{c}\leq c \sin\frac{\pi}{2}\rho.$
\end{lemma}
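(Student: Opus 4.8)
The plan is to reduce the statement to an elementary distance computation in the plane, exploiting that the sector $S=\{w:|\arg w|\le\tfrac{\pi}{2}\rho\}$ is convex whenever $0<\rho\le1$, since its opening angle $\pi\rho$ does not exceed $\pi$. For a closed convex set with non-empty interior, the disk $\{w:|w-c|\le R_{c}\}$ lies inside it exactly when $c$ belongs to the set and $R_{c}\le\mathrm{dist}(c,\partial S)$; so the whole task is to identify $\mathrm{dist}(c,\partial S)$ with the quantity $\Re[c]\sin\tfrac{\pi}{2}\rho-\Im[c]\cos\tfrac{\pi}{2}\rho$ appearing in the hypothesis.

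First I would write $S=H_{+}\cap H_{-}$, where $H_{\pm}$ are the two closed half-planes bounded by the lines $\ell_{\pm}$ through the origin in the directions $e^{\pm i\pi\rho/2}$; this decomposition is exactly where $\rho\le1$ enters. Since $\mathbb{C}\setminus S=(\mathbb{C}\setminus H_{+})\cup(\mathbb{C}\setminus H_{-})$, for an interior point $c$ one has $\mathrm{dist}(c,\partial S)=\min\{\mathrm{dist}(c,\ell_{+}),\mathrm{dist}(c,\ell_{-})\}$. Using the hypothesis $\Im[c]\ge0$, write $c=|c|e^{i\varphi}$ with $0\le\varphi\le\tfrac{\pi}{2}\rho$. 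Projecting onto the unit normal of $\ell_{+}$ gives $\mathrm{dist}(c,\ell_{+})=|\Re[c]\sin\tfrac{\pi}{2}\rho-\Im[c]\cos\tfrac{\pi}{2}\rho|=|c|\sin\!\left(\tfrac{\pi}{2}\rho-\varphi\right)\ge0$, and by symmetry $\mathrm{dist}(c,\ell_{-})=|c|\sin\!\left(\tfrac{\pi}{2}\rho+\varphi\right)$; a short check that the foot of each perpendicular has nonnegative parameter along the respective ray (it equals $|c|\cos\!\left(\tfrac{\pi}{2}\rho\mp\varphi\right)\ge0$ because $\tfrac{\pi}{2}\rho\le\tfrac{\pi}{2}$) shows these distances to the full lines coincide with the distances to the two bounding rays of $S$.

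Then I would compare the two distances: $\mathrm{dist}(c,\ell_{-})-\mathrm{dist}(c,\ell_{+})=|c|\bigl(\sin(\tfrac{\pi}{2}\rho+\varphi)-\sin(\tfrac{\pi}{2}\rho-\varphi)\bigr)=2|c|\cos\tfrac{\pi}{2}\rho\,\sin\varphi\ge0$, since $\cos\tfrac{\pi}{2}\rho\ge0$ and $\sin\varphi\ge0$. Hence $\mathrm{dist}(c,\partial S)=|c|\sin(\tfrac{\pi}{2}\rho-\varphi)=\Re[c]\sin\tfrac{\pi}{2}\rho-\Im[c]\cos\tfrac{\pi}{2}\rho$, and combining this with the convexity remark, the hypothesis $R_{c}\le\Re[c]\sin\tfrac{\pi}{2}\rho-\Im[c]\cos\tfrac{\pi}{2}\rho$ yields $\{w:|w-c|\le R_{c}\}\subseteq S$, as claimed; the particular case follows by setting $\Im[c]=0$ and $\Re[c]=c$. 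The one place calling for a little care is the reduction step: one must make sure the nearest point of $\mathbb{C}\setminus S$ is not located ``around the corner'' at the vertex of the sector, which is precisely what the parameter check and the convexity hypothesis $\rho\le1$ guarantee; everything else is routine trigonometry.
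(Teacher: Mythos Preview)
Your argument is correct: the reduction to a distance computation via the convexity of the sector, together with the explicit identification $\mathrm{dist}(c,\partial S)=\Re[c]\sin\tfrac{\pi}{2}\rho-\Im[c]\cos\tfrac{\pi}{2}\rho$ for $\Im[c]\ge0$, is exactly the right idea, and your foot-of-perpendicular check cleanly disposes of the vertex issue. The paper itself does not prove this lemma---it is quoted from \cite{gang} and invoked as a black box in the proof of Theorem~\ref{strong1}---so there is no in-paper argument to compare against.
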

The above definition and lemma are used to determine the radii of strong  starlikeness of order $\rho$ for the functions of the form (\ref{uncv}).
\begin{theorem}\label{strong1}
Let $\left(\frac{1}{\omega},\beta\right)\in W_i,$ $\gamma>0, 0 < \rho \leq 1.$ The following are true.
\begin{enumerate}
\item[\bf a.] The radius of strong starlikeness of order $\rho$ of the $f_{\omega,\beta,\gamma}$ is  the smallest positive zero of $\psi(r)=0$ in $(0,\lambda_{\omega,\beta,\gamma,1})$ where
$$\psi(r)=\frac{2}{\beta}\sum_{n \geq 1}\frac{r^{2}(\lambda^{2}_{\omega,\beta,\gamma,n}+r^{2}\sin \frac{\pi}{2}\rho)}{\lambda^{4}_{\omega,\beta,\gamma,n}-r^{4}}-\sin \frac{\pi}{2}\rho.$$
\item[\bf b.] The radius of strong  starlikeness of order $\rho$ of the $g_{\omega,\beta,\gamma}$  is the smallest positive zero of $\phi(r)=0$ in $(0,\lambda_{\omega,\beta,\gamma,1})$ where
 $$\phi(r)=2\sum_{n \geq 1}\frac{r^{2}(\lambda^{2}_{\omega,\beta,\gamma,n}+r^{2}\sin \frac{\pi}{2}\rho)}{\lambda^{4}_{\omega,\beta,\gamma,n}-r^{4}}-\sin \frac{\pi}{2}\rho.$$
\item[\bf c.] The radius of strong  starlikeness of  order $\rho$ of the $h_{\omega,\beta,\gamma}$ is  the smallest positive zero $\varphi(r)=0$ in $(0,\lambda_{\omega,\beta,\gamma,1})$ where
 $$\varphi(r)=\sum_{n \geq 1}\frac{r(\lambda^{2}_{\omega,\beta,\gamma,n}+r\sin \frac{\pi}{2}\rho)}{\lambda^{4}_{\omega,\beta,\gamma,n}-r^{2}}-\sin \frac{\pi}{2}\rho.$$
\end{enumerate}
\end{theorem}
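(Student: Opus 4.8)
\textbf{Proof proposal for Theorem \ref{strong1}.}
The plan is to combine the logarithmic-derivative expansions \eqref{para1}--\eqref{para3} with the disk inclusion in Lemma \ref{strong}. For part (a), I would start from \eqref{para1}, which gives
$$\frac{zf_{\omega,\beta,\gamma}'(z)}{f_{\omega,\beta,\gamma}(z)}=1-\frac{1}{\beta}\sum_{n\geq 1}\frac{2z^{2}}{\lambda^{2}_{\omega,\beta,\gamma,n}-z^{2}}.$$
For $|z|=r<\lambda_{\omega,\beta,\gamma,1}$ one checks that $zf'_{\omega,\beta,\gamma}(z)/f_{\omega,\beta,\gamma}(z)$ lies in a disk centred at a real point $c=c(r)$ whose radius $R=R(r)$ can be estimated using \eqref{uc3} (or the triangle inequality): for each $n$, $z^{2}/(\lambda_{n}^{2}-z^{2})$ maps $|z|\le r$ into a disk, and summing these disks one obtains a disk with centre $1-\frac{1}{\beta}\sum_{n}\frac{2r^{4}}{\lambda_{n}^{4}-r^{4}}\cdot(\text{real part adjustment})$; more precisely, using the identity $\frac{z^{2}}{\lambda_{n}^{2}-z^{2}}=\frac{r^{2}\lambda_{n}^{2}}{\lambda_{n}^{4}-r^{4}}+\text{(term of modulus }\le\frac{r^{4}}{\lambda_{n}^{4}-r^{4}})$ when $|z|=r$, the image is contained in the disk with centre $c(r)=1-\frac{2}{\beta}\sum_{n\ge1}\frac{r^{2}\lambda_{n}^{2}}{\lambda_{n}^{4}-r^{4}}$ and radius $R(r)=\frac{2}{\beta}\sum_{n\ge1}\frac{r^{4}}{\lambda_{n}^{4}-r^{4}}$. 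Since $c(r)$ is real and positive for $r$ small, Lemma \ref{strong} with $\Im[c]=0$ says this disk lies in the sector $|\arg w|\le\frac{\pi}{2}\rho$ precisely when $R(r)\le c(r)\sin\frac{\pi}{2}\rho$, i.e.
$$\frac{2}{\beta}\sum_{n\ge1}\frac{r^{4}}{\lambda_{n}^{4}-r^{4}}\le\Bigl(1-\frac{2}{\beta}\sum_{n\ge1}\frac{r^{2}\lambda_{n}^{2}}{\lambda_{n}^{4}-r^{4}}\Bigr)\sin\frac{\pi}{2}\rho,$$
which rearranges exactly to $\psi(r)\le0$ with $\psi$ as stated, after collecting $\frac{r^{2}\lambda_{n}^{2}\sin\frac{\pi}{2}\rho+r^{4}\sin\frac{\pi}{2}\rho}{\lambda_{n}^{4}-r^{4}}$ — wait, one must be careful that the numerator combines to $r^{2}(\lambda_{n}^{2}+r^{2}\sin\frac{\pi}{2}\rho)$ only after multiplying through by $\sin\frac{\pi}{2}\rho$; I would verify this regrouping carefully.

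Next I would show $\psi$ is strictly increasing on $(0,\lambda_{\omega,\beta,\gamma,1})$: each summand $\frac{r^{2}(\lambda_{n}^{2}+r^{2}\sin\frac{\pi}{2}\rho)}{\lambda_{n}^{4}-r^{4}}$ is a positive increasing function of $r$ on this interval (numerator increasing, denominator decreasing, both positive), so $\psi$ is a locally uniformly convergent sum of increasing functions plus a constant, hence increasing. Together with $\psi(0^{+})=-\sin\frac{\pi}{2}\rho<0$ and $\psi(r)\to+\infty$ as $r\nearrow\lambda_{\omega,\beta,\gamma,1}$, this gives a unique zero $r^{\star}$, and for $r<r^{\star}$ we have $\psi(r)<0$, which by the disk-inclusion argument above means $zf'/f$ lies in the sector for all $|z|<r$. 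This identifies $r^{\star}$ as the radius of strong starlikeness of order $\rho$. Parts (b) and (c) are handled identically: for $g_{\omega,\beta,\gamma}$ one uses \eqref{para2}, which is the $\beta=1$ case of \eqref{para1}, giving $\phi(r)$; for $h_{\omega,\beta,\gamma}$ one uses \eqref{para3} with linear factors $\lambda_{n}^{2}-z$ in place of quadratic ones, so that $z/(\lambda_{n}^{2}-z)=\frac{r\lambda_{n}^{2}}{\lambda_{n}^{4}-r^{2}}+(\text{modulus}\le\frac{r^{2}}{\lambda_{n}^{4}-r^{2}})$ for $|z|=r$, yielding centre and radius leading to $\varphi(r)$.

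The main obstacle I anticipate is justifying the disk-covering step rigorously: it is not quite true that a sum of sets $\sum S_{n}$ (Minkowski sum) of disks $S_{n}=D(c_{n},\rho_{n})$ is the disk $D(\sum c_{n},\sum\rho_{n})$ — that inclusion $\sum S_{n}\subseteq D(\sum c_{n},\sum\rho_{n})$ is true and is all we need, but I must confirm that the map $z\mapsto\sum_{n}\frac{z^{2}}{\lambda_{n}^{2}-z^{2}}$ does send $\{|z|\le r\}$ into $D\bigl(\sum_{n}\frac{r^{2}\lambda_{n}^{2}}{\lambda_{n}^{4}-r^{4}},\sum_{n}\frac{r^{4}}{\lambda_{n}^{4}-r^{4}}\bigr)$. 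This follows termwise from the elementary fact that $\frac{z^{2}}{\lambda_{n}^{2}-z^{2}}$ restricted to $|z|\le r<\lambda_{n}$ lies in the disk of centre $\frac{r^{2}\lambda_{n}^{2}}{\lambda_{n}^{4}-r^{4}}$ and radius $\frac{r^{4}}{\lambda_{n}^{4}-r^{4}}$ — a one-variable Möbius-image computation (the image of the disk $|w|\le r^{2}$ under $w\mapsto w/(\lambda_{n}^{2}-w)$ is again a disk, with these parameters) — and then summing. The secondary point needing care is the sign/positivity of the centre $c(r)=1-\frac{2}{\beta}\sum_{n}\frac{r^{2}\lambda_{n}^{2}}{\lambda_{n}^{4}-r^{4}}$ on $(0,r^{\star})$: this is where the interlacing/reality of zeros (guaranteed by $(\tfrac1\omega,\beta)\in W_{i}$ via the Kumar--Pathan result) and the fact that $\psi(r^{\star})=0$ forces $c(r^{\star})\sin\frac{\pi}{2}\rho=R(r^{\star})>0$, hence $c>0$ throughout, enter. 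Once these are in place the monotonicity and limit computations are routine.
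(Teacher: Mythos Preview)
Your overall strategy is exactly the paper's: obtain a disk estimate for $zf'/f$ termwise from the Möbius image of $|z|\le r$ under $z^{2}\mapsto z^{2}/(\lambda_{n}^{2}-z^{2})$, then invoke Lemma \ref{strong} with a real centre. However, you have swapped the centre and the radius of that Möbius image, and this is not a cosmetic slip: it produces the wrong function and does not rearrange to $\psi(r)$.

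Concretely, the map $w\mapsto w/(a-w)$ with $a=\lambda_{n}^{2}$ sends the real diameter endpoints $w=\pm r^{2}$ to $\dfrac{r^{2}}{a-r^{2}}$ and $-\dfrac{r^{2}}{a+r^{2}}$; averaging and half-differencing gives centre $\dfrac{r^{4}}{\lambda_{n}^{4}-r^{4}}$ and radius $\dfrac{r^{2}\lambda_{n}^{2}}{\lambda_{n}^{4}-r^{4}}$, the opposite of what you wrote. (A sanity check: the image disk must contain $0$, the image of $w=0$, so its radius cannot be smaller than its centre; your assignment has the radius of order $r^{4}$ and the centre of order $r^{2}$, which is impossible.) With the correct identification the condition $R(r)\le c(r)\sin\frac{\pi}{2}\rho$ reads
\[
\frac{2}{\beta}\sum_{n\ge1}\frac{\lambda_{n}^{2}r^{2}}{\lambda_{n}^{4}-r^{4}}
\le\Bigl(1-\frac{2}{\beta}\sum_{n\ge1}\frac{r^{4}}{\lambda_{n}^{4}-r^{4}}\Bigr)\sin\tfrac{\pi}{2}\rho,
\]
and this rearranges exactly to $\psi(r)\le0$ with the numerator $r^{2}(\lambda_{n}^{2}+r^{2}\sin\frac{\pi}{2}\rho)$ of the statement. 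Your displayed inequality instead rearranges to the numerator $r^{2}(r^{2}+\lambda_{n}^{2}\sin\frac{\pi}{2}\rho)$, which is a different function; the hesitation you flagged (``one must be careful that the numerator combines to \ldots'') was well founded. The same swap occurs in your treatment of part (c). Once the centre/radius are corrected, the rest of your argument---monotonicity of $\psi$, the limits at $0$ and at $\lambda_{\omega,\beta,\gamma,1}$, and the use of Lemma \ref{strong}---matches the paper's proof line by line. The paper packages the Möbius disk computation as the ready-made inequality \eqref{strong2} (quoted from \cite{gang}), which you may simply cite rather than rederive.
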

\begin{proof}
For $|z|\leq r <,|z_{k}|<R>r,$ we have from \cite[Lemma-3.2]{gang}
\begin{equation}\label{strong2}
\left|\frac{z}{z-z_{k}}+\frac{r^{2}}{R^{2}-r^{2}}\right|\leq \frac{Rr}{R^{2}-r^{2}}.
\end{equation}
Since the series $\sum_{n\geq 1}\frac{2r^{2}}{\lambda^{2}_{\omega,\beta,\gamma,n}-r^{2}}$ and $\sum_{n \geq 1}\frac{r}{\lambda^{2}_{\omega,\beta,\gamma,n}-r}$ are convergent, from (\ref{strong2}), (\ref{para1}), (\ref{para2}), and (\ref{para3}) we have
\begin{equation}\label{strong3}
\left|\frac{zf^{\prime}_{\omega,\beta,\gamma}(z)}{f_{\omega,\beta,\gamma}(z)}-\left(1-\frac{2}{\beta}\sum_{n\geq 1}\frac{r^{4}}{\lambda^{4}_{\omega,\beta,\gamma,n}-r^{4}}\right)\right| \leq  \frac{2}{\beta}\sum_{n\geq 1}\frac{\lambda^{2}_{\omega,\beta,\gamma,n} r^{2}}{\lambda^{4}_{\omega,\beta,\gamma,n}-r^{4}}
\end{equation}
\begin{equation}\label{strong4}
\left|\frac{zg^{\prime}_{\omega,\beta,\gamma}(z)}{g_{\omega,\beta,\gamma}(z)}-\left(1-2\sum_{n\geq 1}\frac{r^{4}}{\lambda^{4}_{\omega,\beta,\gamma,n}-r^{4}}\right)\right| \leq  2\sum_{n\geq 1}\frac{\lambda^{2}_{\omega,\beta,\gamma,n} r^{2}}{\lambda^{4}_{\omega,\beta,\gamma,n}-r^{4}}
\end{equation}
\begin{equation}\label{strong5}
\left|\frac{zh^{\prime}_{\omega,\beta,\gamma}(z)}{h_{\omega,\beta,\gamma}(z)}-\left(1-\sum_{n\geq 1}\frac{r^{2}}{\lambda^{4}_{\omega,\beta,\gamma,n}-r^{2}}\right)\right| \leq  \sum_{n\geq 1}\frac{\lambda^{2}_{\omega,\beta,\gamma,n} r}{\lambda^{4}_{\omega,\beta,\gamma,n}-r^{2}}
\end{equation}
where equalities are attained only when $z=|z|=r.$ For $z \in \mathbb{D}(\lambda_{\omega,\beta,\gamma,1})$ and $\lambda_{\omega,\beta,\gamma,n}$ denotes the $nth$ positive zero of the Mittag-Leffler function. From the Lemma \ref{strong}, we see that the disc given by (\ref{strong3}) is contained in the sector $|\arg w|\leq \frac{\pi}{2}\rho,$ if
$$\frac{2}{\beta}\sum_{n\geq 1}\frac{\lambda^{2}_{\omega,\beta,\gamma,n} r^{2}}{\lambda^{4}_{\omega,\beta,\gamma,n}-r^{4}} \leq \left(1-\frac{2}{\beta}\sum_{n\geq 1}\frac{r^{4}}{\lambda^{4}_{\omega,\beta,\gamma,n}-r^{4}}\right)\sin \frac{\pi}{2}\rho$$
is satisfied. The above inequalities simplifies to $\psi(r)\leq 0$ where
  $$\psi(r)=\frac{2}{\beta}\sum_{n \geq 1}\frac{r^{2}(\lambda^{2}_{\omega,\beta,\gamma,n}+r^{2}\sin \frac{\pi}{2}\rho)}{\lambda^{4}_{\omega,\beta,\gamma,n}-r^{4}}-\sin \frac{\pi}{2}\rho.$$
  We note that
  $$\psi^{\prime}(r)=\frac{2}{\beta}\sum_{n \geq 1}\frac{2r\lambda^{6}_{\omega,\beta,\gamma,n}+2r^{5}\lambda^{2}_{\omega,\beta,\gamma,n}+4r^{3}\lambda^{4}_{\omega,\beta,\gamma,n}\sin \frac{\pi}{2}\rho}{(\lambda^{4}_{\omega,\beta,\gamma,n}-r^{4})^{2}} \geq 0.$$
  Also $\lim_{r\searrow 0}\psi(r)<0$ and $\lim _{r \nearrow \lambda_{\omega,\beta,\gamma,1}}\psi(r)=\infty.$ Thus $\psi(r)=0$ has a unique root $R_{f}$ in $(0,\lambda_{\omega,\beta,\gamma,1}).$ Hence the function $f_{\omega,\beta,\gamma}$ is strongly starlike in $|z|<R_{f}.$ This completes the proof of part (a).

  The disc given by (\ref{strong4}) is contained in the sector $|\arg w|\leq \frac{\pi}{2}\rho,$ if
  $$\phi(r)=2\sum_{n \geq 1}\frac{r^{2}(\lambda^{2}_{\omega,\beta,\gamma,n}+r^{2}\sin \frac{\pi}{2}\rho)}{\lambda^{4}_{\omega,\beta,\gamma,n}-r^{4}}-\sin \frac{\pi}{2}\rho.$$
   Also $\lim_{r\searrow 0}\phi(r)<0$ and $\lim _{r \nearrow \lambda_{\omega,\beta,\gamma,1}}\phi(r)=\infty.$ This completes the proof of part(b).

   The disc given by (\ref{strong5}) is contained in the sector $|\arg w|\leq \frac{\pi}{2}\rho,$ if
  $$\varphi(r)=\sum_{n \geq 1}\frac{r(\lambda^{2}_{\omega,\beta,\gamma,n}+r\sin \frac{\pi}{2}\rho)}{\lambda^{4}_{\omega,\beta,\gamma,n}-r^{2}}-\sin \frac{\pi}{2}\rho.$$
   Also $\lim_{r\searrow 0}\varphi(r)<0$ and $\lim _{r \nearrow \lambda_{\omega,\beta,\gamma,1}}\varphi(r)=\infty.$ This completes the proof of part(c). 
\end{proof}


\begin{thebibliography}{99}
\bibitem{baricz4} \.{I}. Aktas, \'{A}. Baricz and H. Orhan, {\it Bounds for radii of starlikeness and convexity of some special functions}, Turk. J. Math., {\bf 42}, (2018), no. 1, 211--226.
\bibitem{baricz5} \.{I}. Aktas, \'{A}. Baricz and N. Ya\u{gmur}, {\it Bounds for the radii of univalence of some special functions},  Math. Inequal. Appl., {\bf 20}, (2017), no.3, 825--843.
\bibitem{aktas} \.{I}. Aktas, E. Toklu and H. Orhan, {\it Radii of uniform convexity of some special functions},  Turk. J. Math., (2018), 1--15.
\bibitem{bansal} D. Bansal and J. K. Prajapat, {\it  Certain geometric properties of the Mittag- Leffler functions},  Compl. Var. Elliptic Equ., {\bf 61}, (2016), no.3, 338--350.

\bibitem{baricz1} \'{A}. Baricz, D. K. Dimitrov, H. Orhan and N. Ya\u{g}mur, {\it Radii of starlikeness of some special functions},  Proc. Amer. Math. Soc., {\bf 144}, (2016), no.8, 3355--3367.
\bibitem{baricz2} \'{A}. Baricz, P. A. Kup\'{a}n and R. Sz\'{a}sz, {\it The radius of starlikeness of normalized Bessel functions of the first kind},  Proc. Amer. Math. Soc., {\bf 142}, (2014), no.6, 2019--2025.
\bibitem{baricz6} \'{A}. Baricz, H. Orhan and R. Sz\'{a}sz, {\it The radius of $\alpha$-convexity of normalized Bessel functions of the first kind}, Comput. Methods Funct. Theory, {\bf 16}, (2016), no.1, 93--103.
\bibitem{ponnusamy} \'{A}. Baricz  and S. Ponnusamy, {\it Starlikeness and convexity  of generalized Bessel functions},  Integral Transforms Spec. Funct., {\bf 21}, (2010), 641--653.
\bibitem{prajapati}  \'{A}. Baricz and A. Prajapati, {\it Radii of starlikeness and convexity  of generalized Mittag-Leffler functions},  Math. Commun. in press

\bibitem{baricz7} \'{A}. Baricz and R. Sz\'{a}sz, {\it The radius of convexity of normalized Bessel functions of the first kind}, Anal. Appl., {\bf 12}, (2014), 485--509.
\bibitem{baricz} \'{A}. Baricz, E. Toklu and E. Kadioglu, {\it Radii of starlikeness and convexity of Wright functions},  Math. Commun., {\bf 23 }, (2018), 97--117.
\bibitem{yagmur} \'{A}. Baricz and N. Ya\v{g}mur, {\it Geometric properties of some Lommel and Struve functions},  Ramanujan J., {\bf 42}, (2017), no.2,  325--346.
\bibitem{bharti} R. Bharti, R. Parvatham and A. Swaminathan, {\it On subclasses of uniformly convex functions and corresponding class of starlike functions},  Tamkang J. Math., {\bf 28}, (1997), 17--32.
\bibitem{bohra} N. Bohra and V. Ravichandran, {\it Radii problems for normalized Bessel functions of first kind},  Comput. Methods Funct. Theory, {\bf 18}, (2018), 99--123.
\bibitem {brannan} D. A. Brannan and W. E. Kirwan, {\it On some classes of bounded univalent  functions},  J. Lond. Math. Soc., {\bf 2}, (1969), 431--443.
\bibitem{brown} R. K. Brown, {\it Univalence of Bessel functions},  Proc. Amer. Math. Soc., {\bf 11}, (1960), no.2, 278--283.
\bibitem{caglar} M. Ca\u{g}lar, E. Deniz and R. Sz\'{a}sz, {\it Radii of $\alpha-$convexity of some normalized Bessel functions of the first kind}, Results Math., {\bf 72}, (2017), 2023--2035.
\bibitem{deniz} E. Deniz and  R. Sz\'{a}sz, {\it  The radius of uniform convexity of Bessel functions},  J. Math. Anal. Appl., {\bf 453}, (2017), no.1, 572--588.
\bibitem{duren} P. L. Duren, {\it Univalent functions},  Grundlehren Math. Wies., {\bf 259}, (1983).
\bibitem{dzhr}
M. M. Dzhrbashyan, {\it Integral Transforms and Representations of Functions in the Complex Domain} (in Russian), Nauka, Moscow, 1966.
\bibitem{gang} A. Gangadharan, V. Ravichandran and T. N. Shanmugam, {\it  Radii of convexity and strong starlikeness for some classes of analytic functions},  J. Math. Anal. Appl., {\bf 211}, (1997), no.1,  301--313.
\bibitem{goodman} A. W. Goodman, {\it  On uniformly convex functions},  Ann. Polon. Math., {\bf 56}, (1991), 87--92.
\bibitem{kanas} S. Kanas and A. Wisniowska, {\it Conic regions and k-uniform convexity},  J. Comput. Appl. Math., {\bf 105}, (1999), 327--336.
\bibitem{kanas1} \bysame, {\it Conic domains and starlike functions},  Rev. Roumaine Math. Pures Appl., {\bf 45}, (2000), 647--657.
\bibitem{kanas2} S. Kanas and T. Yaguchi, {\it  Subclasses of k-uniform convex and starlike functions defined by generalized derivative},  Indian J. Pure Appl. Math., {\bf 32}, (2001), no.9, 1275--1282.
\bibitem{todd} E. Kreyszig and J. Todd, {\it  The radius of univalence of Bessel functions}, Illinois J. Math., {\bf 4}, (1960), 143--149.
\bibitem{kumar}
 H. Kumar and A. M. Pathan, {\it
On the distribution of non-zero zeros of generalized Mittag-Leffler functions},
 Int. J. Eng. Res. Appl., {\bf 6}, (2016), no.10, 66--71.
\bibitem{mittag} G. M. Mittag Leffler, {\it Sur la nouvelle fonction $E_{\alpha}(x)$},  C. R. Acad. Sci. Paris, {\bf 137}, (1903), 554--558.
\bibitem{mocanu} P. T. Mocanu and M. O. Reade, {\it The  radius of $\alpha-$convexity for the class of starlike univalent functions, $\alpha$ real},  Proc. Amer. Math. Soc., {\bf 51}, (1975), no.2, 395--400.
\bibitem{mocanu1} P. Mocanu, {\it  Une Propri\'{e}t\'{e} de convexit\'{e} g\'{e}n\'{e}ralis\'{e}e dans La th\'{e}orie de la repr\'{e}sentations conforme},   Mathematica, {\bf 11}, (1969), no.34, 127--133.
\bibitem{merkes} E. P. Merkes and W. T. Scott, {\it  Starlike hypergeometric functions},  Proc. Amer. Math. Soc., {\bf 12}, (1961), 885--888.
\bibitem{ostrov}
I. V. Ostrovski\u{i} and I. N. Peresyolkova, {\it Nonasymptotic results on distribution
of zeros of the function $E_{\rho}(z, \mu)$}, Anal. Math., {\bf 23}, (1997), 283--296.
\bibitem{peres}
I. N. Peresyolkova, {\it On distribution of zeros of
generalized functions of Mittag-Leffler type},
 Mat. Stud., {\bf 13}, (2000), no.2, 157--164.
\bibitem{sedlet}
A. Yu. Popov and  A. M. Sedletskii, {\it Distribution of roots of Mittag-Leffler functions},  Sovrem. Mat. Fundam. Napravl., {\bf 40}, (2011) 3--171; translation in J. Math. Sci., {\bf 190}, (2013), no.2,  209--409.
\bibitem{prabhakar} T. R. Prabhakar, {\it  A singular integral equation with a generalized Mittag-Leffler function in the kernel}, Yokohama Math. J., {\bf 19}, (1971), 7--15.
\bibitem{prajapat} J. K. Prajapat, S. Maharana and D. Bansal, {\it
Radius of starlikeness and Hardy space of Mittag-Leffler functions}, Filomat, {\bf 32}, (2018), no.18, 6475--6486.
\bibitem{ravi} V. Ravichandran, {\it On uniformly convex functions},  Ganita, {\bf 53}, (2002), no.2, 117--124.
\bibitem{ronning} F. Ronning, {\it Uniformly convex functions and a corresponding class of starlike functions},  Proc. Amer. Math. Soc., {\bf 118}, (1993), no.1, 189--196.
\bibitem{deniz1} S. Topkaya, E. Deniz and M. Ca\v{glar}, {\it Radii of the $\beta$ uniformly convex of order $\alpha$ of Lommel and Struve functions}, (2017).
\bibitem{wilf} H. S. Wilf, {\it The radius of univalence of certain entire functions},  Illinois J. Math., (1962), 242--244.
\bibitem{wiman} A. Wiman, {\it  \"{U}ber die Nullstellun der Funktionen $E_{\alpha}(x)$},  Acta. Math., {\bf 29}, (1905), 217--234.
\end{thebibliography}
\end{document}